\theoremstyle{plain}
\newtheorem{theorem}{Theorem}[section]
\newtheorem{lemma}[theorem]{Lemma}
\newtheorem{corollary}[theorem]{Corollary}
\theoremstyle{definition}
\newtheorem{definition}[theorem]{Definition}
\theoremstyle{remark}
\newtheorem{remark}[theorem]{Remark}
\newtheorem*{remark*}{Remark}
\numberwithin{equation}{section}
\newcommand{\bm}[1]{{\mbox{\boldmath$#1$}}}
\def\DOI#1{\href{http://dx.doi.org/#1}{DOI:#1}}
\begin{document}
\begin{frontmatter}
\title{Viscosity characterization of the explosion time distribution for diffusions}
\runtitle{Viscosity characterization of explosion time distribution}

\begin{aug}
\author{\fnms{Yinghui} \snm{Wang}\thanksref{a,e1}\ead[label=e1,mark]{yinghui@math.columbia.edu,\ yinghui@alum.mit.edu}}

\address[a]{Department of Mathematics, Columbia University, New York, NY 10027, USA.
\printead{e1}}

\runauthor{Y. Wang}

\affiliation{Columbia University}

\end{aug}

\begin{abstract}
We show that the tail distribution $U$ of the explosion time for a multidimensional diffusion (and more generally, a suitable function $\mathscr{U}$ of the Feynman-Kac type involving the explosion time) is a viscosity solution of an associated parabolic partial differential equation (PDE), provided that the dispersion and drift coefficients of the diffusion are continuous. This generalizes a result of Karatzas and Ruf (2015), who characterize $U$ as a classical solution of a Cauchy problem for the PDE in the one-dimensional case, under the stronger condition of local  H\"{o}lder continuity on the coefficients.
We also extend their result to $\mathscr{U}$ in the one-dimensional case by establishing the joint continuity of $\mathscr{U}$. Furthermore, we show that $\mathscr{U}$ is dominated by any nonnegative classical supersolution of this Cauchy problem. 
Finally, we consider another notion of weak solvability, that of the distributional (sub/super)solution, and show that $\mathscr{U}$ is no greater than any nonnegative distributional supersolution of the relevant PDE.
\end{abstract}

\begin{keyword}
\kwd{distributional solution}
\kwd{explosion time}
\kwd{Feynman-Kac formula}
\kwd{minimal solution}
\kwd{multidimensional diffusion}
\kwd{second-order parabolic PDE}
\kwd{viscosity solution}
\end{keyword}

\end{frontmatter}

\section{Notation and previous results}

Let $\mathcal{O}$ be a fixed domain of $\mathbb{R}^n$, not necessarily bounded, and $b=(b_1,\dots,b_n): \mathcal{O} \rightarrow \mathbb{R}^n$ and $\sigma=(\sigma_{ik})_{n\times n}: \mathcal{O} \rightarrow \mathbb{M}(n)$ 
two measurable functions.
We assume throughout this paper that for every $x\in \mathcal{O}$, the stochastic differential equation
\begin{equation}\label{eq:SDE}
\text{d}X_i(t) = \sum_k \sigma_{ik}(X(t))\, \text{d}W_k(t) + b_i(X(t))\, \text{d}t\,,\ \ \ \ \ \ i = 1,\dots,n\,,\ \ X(0)=x
\end{equation}
admits a weak solution $(X (\cdot)  ,W (\cdot) )$, $(\Omega, \mathscr{F}, \mathbb{P})$, $\{\mathscr{F}(t)\}_{0\le t < \infty}$ which is {\it unique} in law and defined up until the {\it explosion time} 
\begin{equation*}
S \,:=\, \inf \big\{ t \ge 0 : X(t ) \notin \mathcal{O} \big\}\,. 
\end{equation*}
Here $\,\mathbb{M}(n)$ is the set of $n\times n$ real matrices, 
$W(\cdot)=(W_1 (\cdot),\dots,W_n (\cdot))\,$ is an $n-$dimensional Brownian motion, and the state process   $X(\cdot) =(X_1 (\cdot),\dots,X_n(\cdot))$ is strongly Markovian as a consequence of the uniqueness in law. 
In (\ref{eq:SDE}) and throughout this paper, summations extend from $1$ to $n$\,. We shall denote the distribution of the process $X(\cdot)$ starting at $x$ by $\mathbb{P}_x$\,;   the expectation  $\mathbb{E}^{\mathbb{P}_x}$ corresponding to this probability measure by $\mathbb{E}_x$\,; and by $U$ the {\it tail distribution function} of the explosion time $S$: 
\begin{eqnarray}
\label{eq:U}
[0,\infty) \times \mathcal{O} \ni (t,x) \longmapsto U(t,x) \,:=\,  \mathbb{P}_x [S > t] \in [0,1]\,.
\end{eqnarray}

In \cite{KR}, the authors studied the one-dimensional case $\, n=1\,$,  when $\mathcal{O}=(\ell,r)$ is a fixed open interval with $-\infty \le \ell < r \le \infty$ and $b: \mathcal{O} \rightarrow \mathbb{R}$ and $\sigma: \mathcal{O} \rightarrow \mathbb{R} \setminus \{0\}$ are two measurable functions satisfying
\begin{equation*}
\int_{K} \left( \frac{1}{\sigma^2(y)} + \frac{|b(y)|}{\sigma^2(y)} \right) \mathrm{d}y < \infty
\end{equation*}
for every compact set $K \subset \mathcal{O}$. 
According to the same arguments as in \cite{ES84}, \cite{ES91}
or \cite[Theorem 5.5.15]{KS}, the stochastic differential equation \eqref{eq:SDE}
admits then a weak solution defined up until the explosion time $S$, which is unique in law.  
As a result, the state process $X(\cdot)$ has the strong Markov property.  It is then shown in \cite{KR} that, in the one-dimensional case and under the above conditions,  the function $U$ of (\ref{eq:U}) is jointly continuous, and  furthermore, that if $\sigma$ and $b$ are locally H\"{o}lder continuous, then $U$ solves the Cauchy problem for the parabolic partial differential equation (PDE)
\begin{equation}\label{eq:PDE1}
u_t(t,x) - \frac{1}{2}\sigma^2(x)u_{xx}(t,x) - b(x) u_x(t,x) = 0\,,\quad (t,x) \in (0,\infty) \times \mathcal{O}
\end{equation}
with the initial condition
\begin{equation}\label{eq:initial cond}
u(0,x) = 1\,, \qquad x \in \mathcal{O},
\end{equation}
and is in fact the smallest nonnegative (super)solution of the Cauchy problem \eqref{eq:PDE1}, (\ref{eq:initial cond}). 

\smallskip
\noindent
{\it Remark:} In the one-dimensional case, the   Feller test provides necessary and sufficient conditions under which explosion occurs with positive probability, i.e., $\, \mathbb{P}_x [S<\infty] >0\,$; in this case the Cauchy problem \eqref{eq:PDE1}, (\ref{eq:initial cond}) has lots of classical solutions in addition to its trivial solution $u \equiv 1$,  of which the function $U$ in \eqref{eq:U} is the smallest. 
A host of examples is provided in \cite[Section 6]{KR}.  \qed

\smallskip
A natural question to ask then,  is whether $U$ is still a solution to  \eqref{eq:PDE1}, probably in some weak or generalized  sense, when we weaken the H\"{o}lder condition on $\sigma$ and $b$ to {\it continuity only}. The answer is affirmative. Moreover, this result can be generalized to multidimensional  settings and to functions $\mathscr{U}$ given by multidimensional {\it Feynman-Kac-type} expressions involving the explosion time, namely  
\begin{equation}\label{eq:scr{U}}
\mathscr{U}(t,x) := 
\mathbb{E}_{x} \left[\bm{1}_{\{S > t\}} f(X(t)) \exp \left(-\int_0^t h(X(s))\, \text{d}s\right)\right].
\end{equation}
Here $f$ and $h:\mathcal{O}\to \mathbb{R}$ are   measurable functions, such that   $\mathscr{U}(t,x)$ is well-defined by \eqref{eq:scr{U}} and finite for all $(t,x)\in (0,\infty)\times \mathcal{O}$\,; when  $f\equiv 1$ and $h\equiv 0$, the function $\mathscr{U}$ coincides with the tail distribution $U$ of the explosion time, as defined in \eqref{eq:U}.

\smallskip
\noindent
{\it Remark:}
A sufficient condition for the right-hand side of \eqref{eq:scr{U}} to be well-defined and finite, is that $f$ be bounded and $h$ be  bounded from below (e.g., $h\ge 0$) on $\mathcal{O}$. \qed

\smallskip
For ease of notation we introduce the continuous, adapted, strictly positive process  
\begin{equation}\label{eq:Y}
Y(t):=\exp \left(-\int_0^t h(X(s))\, \text{d}s\right),
\end{equation}
and thus $\mathscr{U}(t,x) =
\mathbb{E}_{x} \left[\bm{1}_{\{S > t\}} f(X(t)) Y(t)\right]$.
Note the dynamics 
$\,\text{d} Y(t) = -h(X(t))Y(t)\,\text{d}t$.

\subsection{Preview}

Section \ref{section:vis} introduces the definition of {\it viscosity solution} and characterizes the function $\mathscr{U}$ as a viscosity solution to the Feynman-Kac-type version of \eqref{eq:PDE1}, \eqref{eq:initial cond}, namely 
\begin{equation}\label{eq:F-K}
\left(u_t -\mathcal{L}'u\right) (t,x) = 0\,,\quad (t,x)\in (0,\infty)\times \mathcal{O} 
\end{equation}
with
\begin{equation}\label{eq:f}
u(0,x) = f(x)\,,\quad x\in \mathcal{O} \,,
\end{equation}
where we have set 
\begin{equation}\label{eq:L'}
\mathcal{L}'u(t,x)\, := \frac{1}{\,2\,}\,\sum_{i,j} a_{ij}(x)u_{x_i x_j}(t,x) + \sum_{i} b_i(x) u_{x_i}(t,x) - h(x)u(t,x)\,,\ \ (a_{ij})_{n\times n} = a:=\sigma\sigma^T.
\end{equation} 
Section \ref{section:continuity} establishes the joint continuity of $\mathscr{U}$ in its arguments $(t,x)$ in the one-dimensional case, and therefore characterizes $\mathscr{U}$ as a classical solution of  the Cauchy problem  \eqref{eq:F-K}, \eqref{eq:f}.
Section \ref{section:min} shows that $\mathscr{U}$ is dominated by {\it any}  nonnegative classical supersolution of  this Cauchy problem. 
Finally, Section  \ref{section:dist} defines another kind of weak solution, the {\it distributional (sub/super) solution},  and shows that $\mathscr{U}$ is no greater than {\it any}  nonnegative distributional supersolution of \eqref{eq:F-K}. 

\subsection{The special case $U$}

In the case of $\mathscr{U}\equiv U$, our results can be summarized as follows.

\begin{theorem}\label{thm:viscosity}
Assume that for every $x\in \mathcal{O}$, the stochastic differential equation \eqref{eq:SDE}
admits a weak solution $(X (\cdot)  ,W (\cdot) )$, $(\Omega, \mathscr{F}, \mathbb{P})$, $\{\mathscr{F}(t)\}_{0\le t < \infty}$ which is  unique in law and defined up until the explosion time $S := \inf\, \{ t \ge 0 : X(t ) \notin \mathcal{O} \}$.
Denote the distribution of the process $X(\cdot)$ starting at $x$ by $\mathbb{P}_x$.  
If the functions $a:=\sigma\sigma^T$ and $b$ are continuous, then the function
$U(t,x) :=  \mathbb{P}_x [S > t]$, $(t,x)\in [0,\infty) \times \mathcal{O} $  is a viscosity solution of the following parabolic equation:
 \begin{equation}\label{eq:PDE}
\big(u_t - \mathcal{L}u \big)(t,x) = 0,\quad (t,x) \in (0,\infty) \times \mathcal{O}
\end{equation} 
with
\begin{equation}\label{eq:L}
\mathcal{L}u(t,x)\, := \, \frac{1}{2}\,\sum_{i,j} a_{ij}(x) \, u_{x_i x_j}(t,x) + \sum_{i} b_i(x) \, u_{x_i}(t,x)\,,
\end{equation} 
and thus a viscosity solution of the Cauchy problem \eqref{eq:PDE}, \eqref{eq:initial cond}, since $U(0,\cdot)=1$. 
\end{theorem}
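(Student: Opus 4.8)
The plan is to verify the two viscosity inequalities (subsolution and supersolution) at an arbitrary interior point $(t_0,x_0)\in(0,\infty)\times\mathcal{O}$ using a test function $\varphi\in C^{1,2}$ that touches $U$ from above (for the subsolution property) or from below (for the supersolution property) at $(t_0,x_0)$. The engine driving both directions is the strong Markov property together with the Dynkin/Itô formula applied to $\varphi$ along the stopped diffusion, and the dominated convergence afforded by the fact that $0\le U\le 1$.

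\smallskip
\textbf{Step 1 (flow/semigroup identity).} First I would record the key probabilistic identity: for any stopping time $\tau$ with $0\le\tau\le t$, the strong Markov property gives
\begin{equation*}
U(t,x)\;=\;\mathbb{E}_x\!\left[\bm{1}_{\{S>\tau\}}\,U\big(t-\tau,\,X(\tau)\big)\right].
\end{equation*}
In particular, taking $\tau=\tau_\rho\wedge(t_0-t)$ where $\tau_\rho:=\inf\{s\ge0:|X(s)-x_0|\ge\rho\}$ is the exit time from a small closed ball $\overline{B_\rho(x_0)}\subset\mathcal{O}$, and evaluating at a point $(t,x)$ near $(t_0,x_0)$, we obtain a "dynamic programming" relation that lets us compare $U$ at $(t_0,x_0)$ with $U$ on a small parabolic boundary.

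\smallskip
\textbf{Step 2 (subsolution inequality).} Let $\varphi\in C^{1,2}$ touch $U$ from above at $(t_0,x_0)$, normalised so $\varphi(t_0,x_0)=U(t_0,x_0)$ and $\varphi\ge U$ nearby. Pick $h>0$ small and set $\theta:=\tau_\rho\wedge h$. Using the identity of Step~1 with $t=t_0$ and $x=x_0$, then $U\le\varphi$ on $\{S>\theta\}$ (valid since the process stays in $B_\rho(x_0)$ up to $\theta$), and finally Itô's formula on $\varphi$ between times $0$ and $\theta$, I would get
\begin{equation*}
0\;=\;U(t_0,x_0)-\mathbb{E}_{x_0}\!\big[\bm{1}_{\{S>\theta\}}U(t_0-\theta,X(\theta))\big]\;\le\;\varphi(t_0,x_0)-\mathbb{E}_{x_0}\!\big[\varphi(t_0-\theta,X(\theta))\big]\;=\;-\,\mathbb{E}_{x_0}\!\Big[\int_0^\theta(\varphi_t-\mathcal{L}\varphi)(t_0-s,X(s))\,\mathrm{d}s\Big],
\end{equation*}
where $\mathcal{L}$ is the operator with coefficients \emph{frozen along the path}. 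Dividing by $\mathbb{E}_{x_0}[\theta]$ and sending $h\downarrow0$ (so $\theta/h\to1$ a.s. and by bounded convergence the spatial average collapses to the value at $x_0$, using continuity of $a$ and $b$ and of $\varphi_t,D\varphi,D^2\varphi$), the right side tends to $-(\varphi_t-\mathcal{L}\varphi)(t_0,x_0)$, yielding $(\varphi_t-\mathcal{L}\varphi)(t_0,x_0)\le0$, the subsolution inequality. The supersolution inequality is entirely symmetric, with $\varphi$ touching from below and the inequality reversed; here one should note that $U$ need not be continuous a priori, so for the "from below" case one uses that $\varphi\le U$ near $(t_0,x_0)$ together with $U\le1$ to justify the estimates, or one first argues lower semicontinuity of $U$ (which follows from Fatou, since $\bm{1}_{\{S>t\}}$ is right-continuous in $t$) and upper semicontinuity is immediate from $\{S>t\}\supseteq\{S>t'\}$ for $t'>t$ plus dominated convergence — in fact $U$ is jointly upper semicontinuous, and lower semicontinuity in $(t,x)$ may require the more careful path-stability argument deferred to Section~\ref{section:continuity}; for the bare viscosity statement it suffices to verify the inequalities for the upper/lower semicontinuous envelopes, which is the standard convention.

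\smallskip
\textbf{Main obstacle.} The delicate point is \emph{not} the Itô expansion but the passage to the limit $h\downarrow0$ when $\rho$ must also shrink: one needs $\mathbb{P}_{x_0}(\tau_\rho<h)\to0$ fast enough that the exit contribution is negligible, and one needs the integrand $(\varphi_t-\mathcal{L}\varphi)(t_0-s,X(s))$ to be uniformly close to its value at $(t_0,x_0)$ on $[0,\theta]$. Both are controlled by continuity of $a,b$ (hence local boundedness, hence the usual exit-time estimate $\mathbb{E}_{x_0}[\tau_\rho\wedge h]\asymp h$ for $h\ll\rho^2$) — this is precisely where the Hölder hypothesis of \cite{KR} is relaxed to mere continuity: we never invoke existence of classical solutions to \eqref{eq:PDE}, only the regularity of the \emph{test} function and continuity of the coefficients. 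A secondary subtlety is that the weak solution is only defined up to $S$ and may be non-unique off a null set; this is handled by working with the law $\mathbb{P}_x$ and the strong Markov property, both of which are given by hypothesis. Assembling Steps~1–2 then gives that $U$ satisfies both viscosity inequalities at every interior point, and since $U(0,\cdot)\equiv1$ matches \eqref{eq:initial cond}, $U$ is a viscosity solution of the Cauchy problem, as claimed.
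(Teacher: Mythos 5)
Your proposal is essentially the \emph{direct} argument that the paper presents in Appendix~A (Theorem~A.1), where the author explicitly caveats that it ``works in several dimensions whenever it is known in advance that the function $U$ of \eqref{eq:U} is continuous.'' For $n=1$ this continuity is \cite[Proposition~4.3]{KR}, so your Step~1 semigroup identity plus the It\^o/Dynkin expansion and the $h\downarrow 0$ limit do go through, and the scheme is correct in that setting.

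The gap appears precisely where you gesture at it and pass on: the viscosity definition tests $\varphi$ against the envelopes $U^*$ and $U_*$, while the strong-Markov/flow identity holds for $U$ itself. You assert that $U$ is ``jointly upper semicontinuous'' as a consequence of the monotonicity of $t\mapsto \bm{1}_{\{S>t\}}$ plus dominated convergence, but that argument controls only the behaviour in the $t$-variable; it says nothing about $\limsup_{x'\to x}U(t,x')$ for fixed $t$, and joint semicontinuity of $U$ in $n>1$ is not known a priori in this paper (indeed, establishing joint continuity even for $n=1$ takes all of Section~\ref{section:continuity} and restrictions on $f,h$). Consequently, your inequality chain $U(t_0,x_0)-\mathbb{E}_{x_0}[\bm{1}_{\{S>\theta\}}U(t_0-\theta,X(\theta))]\le\varphi(t_0,x_0)-\mathbb{E}_{x_0}[\varphi(t_0-\theta,X(\theta))]$ presupposes $\varphi(t_0,x_0)=U(t_0,x_0)$, whereas the actual test-function hypothesis gives $\varphi(t_0,x_0)=U^*(t_0,x_0)\ge U(t_0,x_0)$ with no guaranteed equality, and the comparison $U\le\varphi$ near $(t_0,x_0)$ is also only available for $U^*$, not for $U$.

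The paper's Section~2 proof resolves this by arguing by contradiction and decoupling the base point of the martingale/supermartingale argument from the extremum of $U^*-\varphi$. It assumes a \emph{strict} maximum of $U^*-\varphi$ with $\mathscr{G}(t_0,x_0)>0$, uses only the continuity of $\varphi$ together with the very definition of $U^*$ to locate a nearby $(t^*,x^*)$ where $(U-\varphi)(t^*,x^*)>-C_2$, and then runs the It\^o identity and the martingale property of $t\mapsto U(t^*-\nu,X(\nu))$ (under $\mathbb{P}_{x^*}$, not $\mathbb{P}_{x_0}$) to get a contradiction, with the margin $C_1>0$ from the strictness of the maximum absorbing the slack between $U$ and $U^*$. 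This is the genuine extra idea needed in the multidimensional case; without it, the proposal is valid only where continuity of $U$ is already in hand (e.g. $n=1$, or the situation of Appendix~A).
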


\begin{theorem}\label{thm:smallest supersol}
The function $\,U$ is dominated by any nonnegative, classical supersolution $\,\mathcal{U} \in C([0,\infty) \times \mathcal{O}) \cap C^{1,2}((0,\infty) \times \mathcal{O})$  of the Cauchy problem \eqref{eq:PDE}, \eqref{eq:initial cond}.
\end{theorem}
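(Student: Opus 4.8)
The plan is to combine the probabilistic representation of $U$ with It\^o's formula and optional stopping, localizing both in space (to keep the diffusion away from $\partial\mathcal{O}$, where $\mathcal{U}$ is uncontrolled) and in time (to keep the time argument bounded away from $0$, where $\mathcal{U}$ is only assumed continuous, not $C^{1,2}$).

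Fix $(t,x)\in[0,\infty)\times\mathcal{O}$; the case $t=0$ is immediate, since $U(0,x)=\mathbb{P}_x[S>0]=1$ whereas the supersolution property forces $\mathcal{U}(0,x)\ge1$. Assume then $t>0$. I would choose bounded open sets $\mathcal{O}_m\uparrow\mathcal{O}$ with $\overline{\mathcal{O}_m}\subset\mathcal{O}$, $x\in\mathcal{O}_1$, and set $S_m:=\inf\{s\ge0:X(s)\notin\mathcal{O}_m\}$, so that $S_m\uparrow S$ almost surely. For $\varepsilon\in(0,t)$ put $\tau:=(t-\varepsilon)\wedge S_m$. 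On the stochastic interval $[0,\tau]$ the point $(t-s,X(s))$ stays in the compact set $K_{m,\varepsilon}:=[\varepsilon,t]\times\overline{\mathcal{O}_m}\subset(0,\infty)\times\mathcal{O}$, on which $\mathcal{U}$ is $C^{1,2}$ with bounded derivatives and $a,b$ are bounded and continuous. It\^o's formula applied to $s\mapsto\mathcal{U}(t-s,X(s))$ then gives, for $0\le s\le t$, the identity $\mathcal{U}(t-s\wedge\tau,X(s\wedge\tau))=\mathcal{U}(t,x)-\int_0^{s\wedge\tau}(\mathcal{U}_t-\mathcal{L}\mathcal{U})(t-u,X(u))\,\mathrm{d}u+N_{s\wedge\tau}$, where $N$ is a continuous local martingale with $N_0=0$. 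Since the left-hand side and the integral are both bounded (everything lives in $K_{m,\varepsilon}$ and $\tau\le t$), $N_{\cdot\wedge\tau}$ is a bounded, hence true, martingale, and because $\mathcal{U}$ is a supersolution the integrand $\mathcal{U}_t-\mathcal{L}\mathcal{U}$ is nonnegative. Taking expectations at $s=t$ yields $\mathbb{E}_x[\mathcal{U}(t-\tau,X(\tau))]\le\mathcal{U}(t,x)$.

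Next I would let $\varepsilon\downarrow0$ with $m$ fixed. On the event $\{t\le S_m\}$ one has $\tau=t-\varepsilon$ for all small $\varepsilon$, and by continuity of the paths and of $\mathcal{U}$ on $[0,\infty)\times\mathcal{O}$, $\mathcal{U}(t-\tau,X(\tau))=\mathcal{U}(\varepsilon,X(t-\varepsilon))\to\mathcal{U}(0,X(t))$; on $\{S_m<t\}$ one has $\tau=S_m$ for all small $\varepsilon$ and $\mathcal{U}(t-\tau,X(\tau))=\mathcal{U}(t-S_m,X(S_m))\ge0$, since $\mathcal{U}\ge0$ and $(t-S_m,X(S_m))\in[0,\infty)\times\mathcal{O}$. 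The integrand is bounded by $\max_{[0,t]\times\overline{\mathcal{O}_m}}\mathcal{U}$, so dominated convergence together with the nonnegativity of the second contribution gives $\mathbb{E}_x[\bm{1}_{\{t\le S_m\}}\,\mathcal{U}(0,X(t))]\le\mathcal{U}(t,x)$; and since $\mathcal{U}(0,\cdot)\ge1$ on $\mathcal{O}$ this forces $\mathbb{P}_x[t\le S_m]\le\mathcal{U}(t,x)$. Finally, letting $m\to\infty$ and using $S_m\uparrow S$, the events $\{t\le S_m\}$ increase to $\{S>t\}$, so monotone convergence yields $U(t,x)=\mathbb{P}_x[S>t]\le\mathcal{U}(t,x)$, which is the claim.

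I expect the only genuine difficulty to be organizational: arranging the double localization so that It\^o's formula, the boundedness of the correction term, and the martingale property are all legitimate, and then verifying that after the two passages to the limit the surviving event is precisely $\{S>t\}$. The latter rests on the standard facts that, run up to its explosion time, the diffusion leaves every relatively compact subset of $\mathcal{O}$ strictly before $S$ on $\{S<\infty\}$, and that $S_m\uparrow\infty$ on $\{S=\infty\}$; everything else reduces to routine estimates on compact sets.
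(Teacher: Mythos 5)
Your proof is correct and follows essentially the same route as the paper's: apply It\^o's formula to $\mathcal{U}$ composed with the time-reversed clock and the diffusion stopped at $S_m$, invoke the supersolution inequality, use optional sampling/expectation, and pass to the limit $m\to\infty$ by monotone convergence. The only stylistic difference is how the $C^{1,2}$-only-on-$(0,\infty)\times\mathcal{O}$ issue at time $0$ is handled: you localize in time as well, via $\tau=(t-\varepsilon)\wedge S_m$, and then let $\varepsilon\downarrow0$ using continuity of $\mathcal{U}$ up to $\{0\}\times\mathcal{O}$, whereas the paper instead observes that the resulting nonnegative local supermartingale on $[0,T)$ extends by continuity to a genuine supermartingale on $[0,T]$; both resolutions are valid.
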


\begin{theorem}\label{thm:smallest dist supersol}
Assume that $\sigma$ and $b$ are locally bounded and $a$ is locally strictly elliptic. If $\,v\in W^{1,2}_{loc}((0,\infty) \times \mathcal{O})$ is a nonnegative distributional supersolution of \eqref{eq:PDE} with $v(0,\cdot)\geq 1$ on $\mathcal{O}$, then $v\geq U$ on $[0,\infty) \times \mathcal{O}$. 

Consequently, if the function $U$ belongs to the space $\, W^{1,2}_{loc}((0,\infty) \times \mathcal{O})\,$ and is a distributional supersolution of \eqref{eq:PDE}, then it is the smallest nonnegative distributional supersolution of \eqref{eq:PDE} that satisfies  $u(0,\cdot)\geq 1$ on $\mathcal{O}$. 
\end{theorem}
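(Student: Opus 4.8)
The plan is to combine a localization in space, the generalized It\^o--Krylov formula, and a passage to the limit, using the non-negativity of $v$ to keep every estimate elementary. Fix $T>0$ and $x\in\mathcal{O}$. Pick bounded open sets $\mathcal{O}_1\subset\mathcal{O}_2\subset\cdots$ with each $\overline{\mathcal{O}_m}$ compact and contained in $\mathcal{O}$, with $x\in\mathcal{O}_1$ and $\bigcup_m\mathcal{O}_m=\mathcal{O}$, and set $\tau_m:=\inf\{t\ge0:X(t)\notin\mathcal{O}_m\}$, so that $\tau_m\uparrow S$. Since $v\in W^{1,2}_{loc}$, being a distributional supersolution of \eqref{eq:PDE} amounts to $v_t-\mathcal{L}v\ge0$ a.e.\ on $(0,\infty)\times\mathcal{O}$; and since $v_t\in L^2_{loc}$, the map $s\mapsto v(s,\cdot)$ is absolutely continuous from $(0,\infty)$ into $L^2(\mathcal{O}_m)$ for each $m$, hence has an $L^2(\mathcal{O}_m)$-limit as $s\downarrow0$, namely the trace $v(0,\cdot)$, which by hypothesis is $\ge1$.

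Next I would apply the It\^o--Krylov formula---legitimate here precisely because $a$ is locally strictly elliptic and $\sigma,b$ are locally bounded, so that the occupation times of $X$ up to $\tau_m$ are dominated by Lebesgue measure (Krylov's estimate)---to the process $M(t):=v(T-t,X(t))$ on the stochastic interval $[0,\rho_m]$, where $\rho_m:=\tau_m\wedge(T-s)$ for a fixed $s\in(0,T)$; on this interval $(T-t,X(t))$ ranges over the compact set $[s,T]\times\overline{\mathcal{O}_m}\subset(0,\infty)\times\mathcal{O}$, where $v\in W^{1,2}$. The formula gives $M(t)=v(T,x)+N_t-A_t$ with $N$ a local martingale, $N_0=0$, and $A_t=\int_0^{t\wedge\rho_m}(v_t-\mathcal{L}v)(T-u,X(u))\,\mathrm{d}u$ nondecreasing. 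Because $v\ge0$ forces $M\ge0$, we get $N_t\ge-v(T,x)$, so $N$ is a supermartingale and $\mathbb{E}_x[M(\rho_m)]\le v(T,x)$. Discarding the nonnegative contribution of the event $\{\tau_m<T-s\}$ leaves
\begin{equation*}
v(T,x)\ \ge\ \mathbb{E}_x\!\big[v(s,X(T-s))\,\mathbf{1}_{\{\tau_m>T-s\}}\big]\ =\ \int_{\mathcal{O}_m}v(s,y)\,p_m(T-s;x,y)\,\mathrm{d}y ,
\end{equation*}
where $p_m(t;x,\cdot)$ is the transition density of $X$ killed upon leaving $\mathcal{O}_m$, i.e.\ the Dirichlet heat kernel of $\mathcal{L}$ on $\mathcal{O}_m$.

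Now I would let $s\downarrow0$ with $m$ fixed. On the bounded set $\mathcal{O}_m$ the coefficients $a,b$ are bounded and $a$ is uniformly elliptic, so classical parabolic theory (Aronson/Nash bounds) makes $p_m(t;x,\cdot)$ bounded and continuous in $(t,y)$ for $t$ near $T$, with $p_m(T-s;x,\cdot)\to p_m(T;x,\cdot)$ in $L^\infty(\mathcal{O}_m)$; combined with $v(s,\cdot)\to v(0,\cdot)$ in $L^1(\mathcal{O}_m)$ (the norms $\|v(s,\cdot)\|_{L^1(\mathcal{O}_m)}$ staying bounded, as $v\ge 0$), this gives $\int_{\mathcal{O}_m}v(s,y)p_m(T-s;x,y)\,\mathrm{d}y\to\int_{\mathcal{O}_m}v(0,y)p_m(T;x,y)\,\mathrm{d}y\ge\int_{\mathcal{O}_m}p_m(T;x,y)\,\mathrm{d}y=\mathbb{P}_x[\tau_m>T]$. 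Hence $v(T,x)\ge\mathbb{P}_x[\tau_m>T]$ for every $m$, and letting $m\to\infty$ (so $\{\tau_m>T\}\uparrow\{S>T\}$) yields $v(T,x)\ge\mathbb{P}_x[S>T]=U(T,x)$. Since $(T,x)$ was arbitrary and $v(0,\cdot)\ge1=U(0,\cdot)$, we conclude $v\ge U$ on $[0,\infty)\times\mathcal{O}$. The \emph{consequently} assertion is then immediate: under its hypotheses $U$ is itself a nonnegative distributional supersolution belonging to $W^{1,2}_{loc}$ with $U(0,\cdot)=1$, hence a competitor in the minimization, and by what we have just proved it is dominated by every competitor---so it is the smallest one.

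I expect the two regularity inputs to be the real work, and they are exactly what forces the hypotheses of local strict ellipticity and local boundedness. First, justifying the It\^o--Krylov formula for the merely $W^{1,2}_{loc}$ function $v$: one must convert the \emph{almost everywhere} inequality $v_t-\mathcal{L}v\ge0$ into a genuine supermartingale property, which relies on the non-degeneracy of $X$ and Krylov-type bounds on $\int_0^{\tau_m}g(X(u))\,\mathrm{d}u$ for $g$ in the relevant $L^p_{loc}$ space (this is also where one checks that the stochastic-integral term is a bona fide local martingale). Second, the limit $s\downarrow0$: it needs both the existence of the $L^1_{loc}$ time-trace $v(0,\cdot)\ge1$ and enough regularity of the killed kernel $p_m$ away from $t=0$. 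A purely analytic variant---comparing $v$, via a weak comparison principle for distributional sub/supersolutions in $W^{1,2}_{loc}$, with the weak solution $(t,x)\mapsto\mathbb{P}_x[\tau_m>t]$ of the Dirichlet problem for $u_t=\mathcal{L}u$ on $\mathcal{O}_m$, and then sending $m\to\infty$---would give the same conclusion while relying on the same body of parabolic estimates.
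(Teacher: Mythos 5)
Your proof follows the same strategy as the paper's: apply Krylov's generalized It\^o formula (the paper's Lemma \ref{lemma:generalized Ito F-K}, i.e.\ \cite[Theorem~2.10.1]{K}) to the time-reversed function, localize via an exhaustion $\mathcal{O}_m\uparrow\mathcal{O}$ and the exit times $S_m$, and pass to the limit first in the ``small-time parameter'' and then in $m$. Where you diverge is in the small-time limit $s\downarrow 0$. You introduce $L^1$-in-time traces of $v$, killed Dirichlet heat kernels $p_m$, and Aronson/De~Giorgi--Nash--Moser regularity to argue that $\int_{\mathcal{O}_m}v(s,y)\,p_m(T-s;x,y)\,\mathrm{d}y\to\int_{\mathcal{O}_m}v(0,y)\,p_m(T;x,y)\,\mathrm{d}y$. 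This is more machinery than is needed and also raises issues you do not address: for merely bounded measurable $a,b$ the continuity of $(t,y)\mapsto p_m(t;x,y)$ up to the boundary requires some care about the smoothness of $\partial\mathcal{O}_m$ (one must choose the $\mathcal{O}_m$ to have, say, $C^{1,\alpha}$ boundary), and the existence of a genuine $L^2$ trace at $t=0$ from $v_t\in L^{n+1}_{loc}((0,\infty)\times\mathcal{O})$ is not automatic without some quantitative control near $t=0$. The paper sidesteps all of this by exploiting the definition of the function space: in Krylov's $W^{1,2}(Q)$ (Definition \ref{def:W^{1,2}}), membership already entails $v\in C(\overline{Q})$ for every bounded $Q\subset(0,\infty)\times\mathcal{O}$, in particular for $Q=(0,T)\times\mathcal{O}_m$, so $v$ is continuous on all of $[0,T]\times\overline{\mathcal{O}_m}$ including at $t=0$. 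The limit $1/i\downarrow 0$ is then handled by pathwise continuity and dominated convergence, with no heat-kernel estimates and no trace theory. Your local-martingale-bounded-from-below observation (hence supermartingale) is a perfectly good substitute for the paper's direct use of the equality in Krylov's formula together with $u_t+\mathcal{L}u\le 0$; both are correct. Your ``consequently'' paragraph matches the paper. So: the argument is sound in outline, but you should replace the heat-kernel/trace step with the elementary observation that $v$ is continuous up to $t=0$ by the very definition of $W^{1,2}_{loc}$ used here, which trims the proof and removes the hidden regularity hypotheses you would otherwise need to justify.
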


\section{Viscosity characterization of $\mathscr{U}(t,x)$}\label{section:vis}

This section develops a Feynman-Kac-type result involving the explosion time, which characterizes  the function  $(t,x) \mapsto \mathscr{U}(t,x)$ as a viscosity solution of the associated parabolic equation \eqref{eq:F-K}.

\subsection{Definition of viscosity solutions}
\label{subsection:Definition of Viscosity Solutions}

We first recall from \cite{CIL} the definition of viscosity (sub/super)solutions of a second-order parabolic PDE.  Let $\mathcal{O}$ be an open subset of $\mathbb{R}^n$ and $(t,x,y,p,q) \mapsto F(t,x,y,p,q)$ a continuous, real-valued  mapping defined on $(0,\infty)\times \mathcal{O}\times \mathbb{R}\times \mathbb{R}^n\times \mathbb{S}(n)$  and    
satisfying the ellipticity condition 
\begin{equation*}
 F(t,x,y,p,q_1) \leq F(t,x,y,p,q_2) {\mathrm{ \ \ \ whenever\ }} \ q_1 \geq q_2 
\end{equation*}
for all $(t,x,y,p) \in (0,\infty)\times \mathcal{O}\times \mathbb{R}\times \mathbb{R}^n$. Here $\mathbb{S}(n)$ is the set of $n\times n$ real symmetric matrices.

Consider the second-order parabolic PDE
\begin{equation}
\label{eq:F}
u_t(t,x) + F\left(t,x,u(t,x),Du(t,x),D^2 u(t,x)\right) = 0,\ \ \ \ (t,x)\in (0,\infty)\times \mathcal{O}
\end{equation}
with $Du=(u_{x_1}, u_{x_2}, \dots, u_{x_n})'$ and $D^2 u=(u_{x_i x_j})_{n\times n}$\,.

\begin{definition}
\label{def:viscosity sol} {\bf (i)}
We say that a function $u :(0,\infty) \times \mathcal{O}\to \mathbb{R}$ is a {\em viscosity supersolution} of the equation \eqref{eq:F}, if
\begin{equation}\label{eq:F>=0}
\varphi_t(t_0,x_0) + F\left(t_0,x_0,u_*(t_0,x_0),D\varphi(t_0,x_0),D^2 \varphi(t_0,x_0)\right)\geq 0
\end{equation}
holds for all $(t_0,x_0)\in (0,\infty) \times \mathcal{O}$ and test functions $\varphi  \in C^{1,2}((0,\infty) \times \mathcal{O})$ such that $(t_0,x_0)$ is a minimum of $\,u_*-\varphi\,$  on $(0,\infty) \times \mathcal{O}$. We have denoted by $$\, u_*(t,x) : =  \liminf_{(s,y)\rightarrow (t,x)} u(s,y),\ \ \ \ (t,x)\in (0,\infty)\times \mathcal{O}$$    the {\em lower-semicontinuous envelope} of $u$, i.e., the largest lower-semicontinuous function dominated pointwise by the function $u$.
 
\smallskip
{\bf (ii)} Similarly, a function $u(t,x):(0,\infty) \times \mathcal{O}\to \mathbb{R}$ is a {\em{viscosity subsolution}} of \eqref{eq:F}, if
\begin{equation}\label{eq:F<=0}
\varphi_t(t_0,x_0) + F\left(t_0,x_0,u^*(t_0,x_0),D\varphi(t_0,x_0),D^2 \varphi(t_0,x_0)\right)\leq 0
\end{equation}
holds for all $(t_0,x_0)\in (0,\infty) \times \mathcal{O}$ and test functions $\varphi \in C^{1,2}((0,\infty) \times \mathcal{O})$ such that $(t_0,x_0)$ is a maximum of $\,u^*-\varphi\,$  on $(0,\infty) \times \mathcal{O}$. 
We have denoted by 
\begin{equation}\label{eq:u^*}
\,u^*(t,x) := \limsup_{(s,y)\rightarrow (t,x)} u(s,y),\ \ \ \ (t,x)\in (0,\infty)\times \mathcal{O}
\end{equation}
the {\em upper-semicontinuous envelope} of $u$, i.e., the smallest upper-semicontinuous function that dominates pointwise the function $u$.
  
\smallskip

{\bf (iii)} Finally, we say that $u$ is a {\em{viscosity solution}} of \eqref{eq:F}, if it is both a viscosity supersolution and a viscosity subsolution of this equation.

\smallskip
In addition, this definition is not changed if the minimum and maximum are {\em strict and/or local}.
\end{definition} 

In our setting we have  
\begin{equation*}
F(t,x,y,p,q) \,=\, - \, \frac{1}{\,2\,}  \sum_{i,j} a_{ij}(x)\, q_{ij} - \sum_{i} b_i(x)\, p_i + h(x)\, y\,,
\end{equation*}  
and the left-hand sides of (\ref{eq:F>=0}) and (\ref{eq:F<=0}) simplify to $\big(\varphi_t-\mathcal{L}'\varphi \big)(t_0,x_0)$ with $\mathcal{L}'$ defined in \eqref{eq:L'}. Since $a=\sigma \sigma^T$ is positive-semidefinite, 
the function   $F$ satisfies the ellipticity condition. We also need $F$ to be continuous,  
which means that the functions $a_{ij}$, $b_i$ and $h$ must be continuous for all indices $i,j$.

\subsection{Viscosity characterization}

\begin{theorem}\label{thm:viscosity F-K}
Assume that the functions $a$, $b$ and $h$ are continuous. Then the function $\mathscr{U}$ of \eqref{eq:scr{U}} is a viscosity solution of the parabolic equation \eqref{eq:F-K}, 
and thus a viscosity solution of the Cauchy problem \eqref{eq:F-K}, \eqref{eq:f}, since $\mathscr{U}(0,x)=f(x)$.
\end{theorem}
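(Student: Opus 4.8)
\textit{Proof proposal.} The plan is to read off the two viscosity inequalities from a dynamic programming (flow) identity for $\mathscr{U}$, obtained from the strong Markov property, combined with It\^o's formula applied to test functions on small parabolic cylinders. Since $\mathscr{U}$ is not yet known to be continuous, I would work throughout with the semicontinuous envelopes $\mathscr{U}_*,\mathscr{U}^*$, as the definition of viscosity solution requires. Recall from the discussion after Definition \ref{def:viscosity sol} that the two inequalities to be established are simply $\big(\varphi_t-\mathcal{L}'\varphi\big)(t_0,x_0)\ge 0$ (resp. $\le 0$) at minima of $\mathscr{U}_*-\varphi$ (resp. maxima of $\mathscr{U}^*-\varphi$). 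The first step is the flow identity: writing $Y(t)=Y(s)\cdot\big(Y(t-s)\circ\vartheta_s\big)$ for the time-shift operators $\vartheta_s$, $\{S>t\}=\{S>s\}\cap\big(\{S>t-s\}\circ\vartheta_s\big)$ and $f(X(t))=f(X(t-s))\circ\vartheta_s$, the strong Markov property at a bounded stopping time $\theta$ with $\theta<S$ a.s. gives
\[
\mathscr{U}(t,x)\,=\,\mathbb{E}_x\!\left[\bm{1}_{\{S>\theta\}}\,Y(\theta)\,\mathscr{U}\big(t-\theta,X(\theta)\big)\right],\qquad 0\le\theta<t
\]
(a Tonelli argument when $f\ge 0$, and upon splitting $f=f^+-f^-$ in general; note $\mathscr{U}$ is finite everywhere by assumption, so both sides make sense).

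For the supersolution property, fix $(t_0,x_0)\in(0,\infty)\times\mathcal{O}$ and $\varphi\in C^{1,2}$ with $(t_0,x_0)$ a local minimum of $\mathscr{U}_*-\varphi$, normalized (subtracting a constant, restricting to a neighborhood) so that $\mathscr{U}_*\ge\varphi$ near $(t_0,x_0)$ with equality there. Suppose for contradiction that $\big(\varphi_t-\mathcal{L}'\varphi\big)(t_0,x_0)<0$. By continuity of $a,b,h$ and of the derivatives of $\varphi$, there is a cylinder $Q:=(t_0-\delta,t_0+\delta)\times B(x_0,\delta)$ with $\overline Q\subset(0,\infty)\times\mathcal{O}$ on which $\varphi_t-\mathcal{L}'\varphi\le-\varepsilon<0$ and $\mathscr{U}_*\ge\varphi$. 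Pick $(t_k,x_k)\to(t_0,x_0)$ in $Q$ with $\mathscr{U}(t_k,x_k)\to\mathscr{U}_*(t_0,x_0)$, start $X$ at $x_k$, and let $\theta_k:=\inf\{s:X(s)\notin B(x_0,\delta)\}\wedge\big(t_k-(t_0-\delta)\big)$; then $\theta_k$ is bounded, $\theta_k<S$ a.s. (so $\bm{1}_{\{S>\theta_k\}}=1$), and $(t_k-s,X(s))\in\overline Q$ for $s\le\theta_k$. The flow identity and $\mathscr{U}\ge\mathscr{U}_*\ge\varphi$ on $\overline Q$ give $\mathscr{U}(t_k,x_k)=\mathbb{E}_{x_k}[Y(\theta_k)\mathscr{U}(t_k-\theta_k,X(\theta_k))]\ge\mathbb{E}_{x_k}[Y(\theta_k)\varphi(t_k-\theta_k,X(\theta_k))]$. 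Applying It\^o's formula to $s\mapsto Y(s)\varphi(t_k-s,X(s))$, whose drift equals $-Y(s)\big(\varphi_t-\mathcal{L}'\varphi\big)(t_k-s,X(s))\ge c\,\varepsilon$ on $[0,\theta_k]$ (with $c>0$ a lower bound for $Y$ on $[0,\theta_k]$, uniform over large $k$) and whose martingale part has bounded integrand there (as $\sigma$ is locally bounded because $a$ is continuous), yields $\mathbb{E}_{x_k}[Y(\theta_k)\varphi(t_k-\theta_k,X(\theta_k))]\ge\varphi(t_k,x_k)+c\,\varepsilon\,\mathbb{E}_{x_k}[\theta_k]$. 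Combining and letting $k\to\infty$ gives $0\ge c\,\varepsilon\,\limsup_k\mathbb{E}_{x_k}[\theta_k]$, a contradiction as soon as $\liminf_k\mathbb{E}_{x_k}[\theta_k]>0$. The subsolution property is entirely symmetric, using $\mathscr{U}^*$, a sequence realizing $\mathscr{U}^*(t_0,x_0)$, the reversed strict sign $\big(\varphi_t-\mathcal{L}'\varphi\big)(t_0,x_0)>0$, and all inequalities flipped.

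The main obstacle — and the step I would write out most carefully — is exactly the lower bound $\liminf_k\mathbb{E}_{x_k}[\theta_k]>0$, which is forced on us precisely because, not knowing $\mathscr{U}$ continuous, we must run along a sequence $(t_k,x_k)\to(t_0,x_0)$. Since $a$ and $b$ are continuous, hence bounded on the compact $\overline{B(x_0,\delta)}$, the estimate $\mathbb{E}_{x_k}\big[\sup_{s\le u}|X(s\wedge\theta_k)-x_0|^2\big]\le C\big(|x_k-x_0|^2+u+u^2\big)$ (from the SDE, It\^o isometry for the martingale part, and Doob's inequality) together with $t_k-(t_0-\delta)\to\delta>0$ shows that, fixing $\delta$ first and then choosing $u$ small and $k$ large, $\mathbb{P}_{x_k}(\theta_k<u)\le\tfrac12$, whence $\mathbb{E}_{x_k}[\theta_k]\ge u/2>0$ for all large $k$. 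The remaining points are routine: the Borel measurability of $\mathscr{U}$ needed to state the flow identity (automatic for the envelopes, and valid for $\mathscr{U}$ itself), localization to justify the vanishing expectation of the stochastic integral, and the fact that $\overline Q$ stays away from $\partial\mathcal{O}$ and within $(0,\infty)$ so that neither explosion nor exhaustion of the time variable occurs before $\theta_k$. Finally $\mathscr{U}(0,x)=\mathbb{E}_x[f(X(0))Y(0)]=f(x)$, which upgrades the statement to the Cauchy problem \eqref{eq:F-K}, \eqref{eq:f}.
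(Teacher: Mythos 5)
Your proof is correct, and it shares with the paper the basic architecture: argue by contradiction on a small parabolic cylinder, apply It\^o's formula to $s\mapsto Y(s)\varphi(t^*-s,X(s))$, use the strong Markov property to obtain the ``flow'' (martingale) identity $\mathscr{U}(t,x)=\mathbb{E}_x[Y(\theta)\mathscr{U}(t-\theta,X(\theta))]$, and exploit the fact that the upper/lower semicontinuous envelope can be approximated by a sequence along which $\mathscr{U}$ converges. But the step that actually closes the contradiction is genuinely different in the two arguments, and the comparison is instructive. The paper never needs a lower bound on the expected exit time: it normalizes the touching to be \emph{strict}, picks $C_1 := -\max_{\partial\mathcal{N}_\delta}(\mathscr{U}^*-\varphi)>0$ and a $Y$-discounted copy $C_2$, and notes that at the stopping time $\nu$ the point $(t^*-\nu,X(\nu))$ is \emph{on the boundary} $\partial\mathcal{N}_\delta$, where the strict maximum property gives $\varphi\ge C_1+\mathscr{U}^*$; the integral of $\mathscr{G}Y$ is then only used with its sign, $\ge 0$, and the contradiction $0<C_2-C_1\mathbb{E}[Y(\nu)]\le 0$ drops out purely from the constants. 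Your route instead quantifies the It\^o drift term as $\ge c\,\varepsilon\,\mathbb{E}_{x_k}[\theta_k]$, and so you must supply the extra estimate $\liminf_k\mathbb{E}_{x_k}[\theta_k]>0$, which you do correctly via the local boundedness of $a,b$ and a Doob/BDG bound on the stopped increments. Both routes work; yours does not rely on strictness of the touching (though strictness is available after the usual $|t-t_0|^2+|x-x_0|^2$ perturbation, so that is not a real gain), while the paper's choice of $C_1,C_2$ avoids any exit-time estimate altogether and is, for that reason, more self-contained. One small point worth noting: in the flow identity you can carry the indicator $\bm{1}_{\{S>\theta\}}$ through without arguing $\theta<S$ a.s.\ in advance, and in any case on your cylinder $Q\subset\subset(0,\infty)\times\mathcal{O}$ the exit time $\theta_k$ is automatically strictly smaller than $S$, so this is only cosmetic.
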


We first highlight the main idea for the proof of viscosity subsolution property without many of the technicalities. Similar arguments will lead to the viscosity supersolution property.

\smallskip

We prove by contradiction, assuming the contrary of \eqref{eq:F<=0} in Definition \ref{def:viscosity sol} that there exist   $\varphi \in C^{1,2}\left( (0,\infty) \times \mathcal{O}\right)$ and  $(t_0,x_0) \in (0,\infty) \times \mathcal{O}$ such that  $(t_0,x_0)$ is a strict maximum of ${\mathscr{U}}^* - \varphi$, that the maximal value equals $0$\,, and that 
\begin{equation}\label{eq:hatG>0}
(\varphi_t-{\mathcal{L}'}\varphi)(t_0,x_0)>0\,. 
\end{equation}
It then follows from the definition \eqref{eq:u^*} of ${\mathscr{U}}^*$ that we can take a pair $(t^*,x^*)$ close to $(t_0,x_0)$ such that $(\varphi-{\mathscr{U}}) (t^*,x^*) \,(\ge 0)$ is sufficiently small, say less than a small constant $C_2$\,. 
Under $\mathbb{P}_{x^*}$, we have
\begin{equation}\label{eq:phi>E[phiY]}
\varphi(t^*,x^*) - \mathbb{E}_{x^*} \left[\varphi\big(t^*-\nu,{X}(\nu)\big)Y(\nu)\right]
=\mathbb{E}_{x^*} \left[\int_0^{\nu} \mathscr{G}\big(t^*-t,{X}(t)\big)Y(t)\, \mathrm{d}t\right] \ge 0\,,
\end{equation}
for any sufficiently small stopping time $\nu$\,, where the process $Y(\cdot)$ is defined \eqref{eq:Y}, and 
$\mathscr{G}(t,x) := (\varphi_t-\mathcal{L}'\varphi)(t,x)$, which is positive for any $(t,x)$ sufficiently close to $(t^*,x^*)$ by assumption \eqref{eq:hatG>0}.

On the other hand, in \eqref{eq:phi>E[phiY]}, we estimate $\varphi(t^*,x^*)$ from above by $C_2+\mathscr{U}(t^*,x^*)$, 
and $\varphi \big(t^*-\nu,{X}(\nu)\big)$ from below by $C_1+{\mathscr{U}}\big(t^*-\nu,{X}(\nu)\big)$ with $C_1$ a small positive constant, and deduce that
\begin{equation*}
\mathscr{U}(t^*,x^*)
>\mathbb{E}_{x^*} \left[\mathscr{U} (t^*-\nu,{X}(\nu))Y(\nu)\right].
\end{equation*}
This inequality contradicts the martingale property of the process $\mathscr{U} (t^*-\nu,{X}(\nu))Y(\nu)$,
which is a consequence of the strong Markov property of $X(\cdot)$.

\smallskip

When implementing this idea, the stopping time $\nu$ needs to be not only small, but also satisfy that on $[0,\nu]$, the process
$X(\cdot)$ is bounded and close to  $x^*$; however, the $\nu$ cannot be too small, in order to ensure that $\varphi \big(t^*-\nu,{X}(\nu)\big)\ge C_1+{\mathscr{U}}\big(t^*-\nu,{X}(\nu)\big)$ holds. These considerations inspire us to design $\nu$ as in  \eqref{eq:nu F-K} below.
\qed

\begin{proof}[Proof of Theorem \ref{thm:viscosity F-K}]
Let us show that $\mathscr{U} $ is a viscosity subsolution to \eqref{eq:F-K}. The proof for the viscosity supersolution property is similar.

According to the definition of viscosity subsolution, if suffices to verify that for any test function $\varphi \in C^{1,2}((0,\infty) \times \mathcal{O})$, and for any $(t_0,x_0) \in (0,\infty) \times \mathcal{O}$ with 
\begin{equation}
\label{eq:max F-K}
(\mathscr{U}^* - \varphi)(t_0,x_0) = 0 > (\mathscr{U}^* - \varphi)(t,x)\,, \ \ \forall\  (t,x)\in (0,\infty) \times \mathcal{O}\,,
\end{equation}
 i.e., such that $(t_0,x_0)$ is a strict maximum of $ \, \mathscr{U}^* - \varphi \,$, we have the inequality 
\begin{equation}
\label{eq:scr{G}}
\mathscr{G}(t_0,x_0) \le 0\, \qquad \text{for the function} \qquad 
\mathscr{G}(t,x) := \left(\varphi_t-\mathcal{L}'\varphi\right)(t,x)\,,
\end{equation}
with $\mathcal{L}'$ and $\mathscr{U}^*$ defined in \eqref{eq:L'} and \eqref{eq:u^*}. 
We shall argue this by contradiction, assuming that $$\mathscr{G}(t_0,x_0)>0\,.$$

Since the function $\mathscr{G} $  just introduced in  (\ref{eq:scr{G}}) is continuous, there exists a neighborhood
\begin{equation}\label{eq:N}
\mathcal{N}_{\delta} := (t_0 - \delta, t_0 + \delta) \times B_{\delta}(x_0)\subset\subset (0,\infty) \times \mathcal{O}
\end{equation} 
of $(t_0,x_0)$, on which $\mathscr{G}>0$ holds (``$\subset\subset$'' means {\it compactly contained in}). 
Recalling \eqref{eq:max F-K} and the condition that $h$ is continuous and thus locally bounded,
we then introduce the constants 
\begin{equation}
\label{eq:C_1}
C_1:= - \max_{\partial \mathcal{N}_{\delta}} \, \big(\mathscr{U}^* - \varphi \big)(t,x) > 0  \,,\ \ \ \ \ 
C_2 := C_1\, \exp\left(-2\delta \sup_{x\in B_{\delta}(x_0)}|h(x)| \right).
\end{equation}
We observe that 
$$\limsup_{(t,x)\to (t_0,x_0)}(\mathscr{U}-\varphi)(t,x) 
= (\mathscr{U}^*-\varphi)(t_0,x_0)=0 $$ 
holds by the definition \eqref{eq:u^*} of $\mathscr{U}^*$ and the continuity of $\varphi$, hence there exists $(t^*,x^*)\in \mathcal{N}_{\delta}$ such that  
\begin{equation}
\label{eq:t*,x* F-K}
 (\mathscr{U}-\varphi)(t^*,x^*)>-C_2\,.
\end{equation}

Let us   consider the stopping time
\begin{equation}
\label{eq:nu F-K}
\nu \,\big(=\nu(\omega)\big)\,:= \, \inf \big\{s\in (0, t^*] : \big(t^*-s,X(s)\big)\notin \mathcal{N}_{\delta} \big\}  \wedge t^* ,
\end{equation}
and note that  the definition \eqref{eq:N} of $\mathcal{N}_{\delta}$ implies 
\begin{equation}
\label{S2d}
\nu<S \qquad \text{and} \qquad \nu \le t^*-(t_0-\delta)=(t^*-t_0)+\delta<t^* \wedge (2\delta)\,.
\end{equation} 

Now thanks to the assumption $\varphi \in C^{1,2}((0,\infty)\times \mathcal{O})$, we can apply It\^o's change of variable rule  to $\varphi(t^*-t,X(t))Y(t)$ for $t\in [0, \nu]\subset[0,t^*)$ and plug in \eqref{eq:SDE} to derive the semimartingale decomposition 
\begin{equation}\label{eq:dIto F-K}
\text{d}\big[ \varphi(t^*-t,X(t))Y(t)\big]
= - \mathscr{G}(t^*-t,X(t))Y(t)\, \text{d}t + \sum_{i,k} \varphi_{x_i}(t^*-t,X(t))\,\sigma_{ik}(X(t))Y(t)\, \text{d}W_k(t)\,, \ \ 
\end{equation} 
with the process $Y(\cdot)$ defined in \eqref{eq:Y} and the function $\mathscr{G}$ in  \eqref{eq:scr{G}} (see Appendix \ref{app:phiY} for a proof).

Integrating (\ref{eq:dIto F-K}) with respect to $t$ over $[0,\nu]$ and taking the expectation under $\mathbb{P}_{x^*}$ yields
\begin{equation}\label{eq:contradiction F-K}
\varphi(t^*,x^*) - \mathbb{E}_{x^*}\left[\varphi\big(t^*-\nu,X({\nu})\big)Y(\nu)\right]= \mathbb{E}_{x^*}\left[\int_{0}^{\nu} \mathscr{G}(t^*-t,X(t))\,Y(t)\, \text{d}t\right] \ge0 \,.
\end{equation} 
Here the last inequality comes from the assumption $\mathscr{G} >0$ on $\mathcal{N}_{\delta}$\,,
whereas the equality holds because the expectations of the integrals with respect to $\mathrm{d}W_k(t)$ have all vanished, due to: \\
{\bf (1)} the
uniform boundedness of $Y(t)$ on $[0,\nu]$;
in fact, we have $Y(t)\le \exp\left(t \sup_{x\in B_{\delta}(x_0)}|h(x)| \right) \le\exp\left(2\delta \sup_{x\in B_{\delta}(x_0)}|h(x)| \right)$ for all $0\le t\le \nu< 2\delta$\,; and \\
{\bf (2)} the boundedness of  $\varphi_{x_i}$ on $\mathcal{N}_{\delta}$, and $a_{ij}$ and thus $\sigma_{ik}$ on $B_{\delta}(x_0)$ (recalling that $a=\sigma\sigma^T$).

\smallskip
Combining \eqref{eq:contradiction F-K} with \eqref{eq:C_1} and \eqref{eq:t*,x* F-K} leads to 
\begin{align} \label{eq:<0 F-K}
\nonumber 0 \,\le&\ \, \varphi(t^*,x^*) - \mathbb{E}_{x^*}\left[\varphi\big(t^*-\nu,X({\nu})\big)Y(\nu)\right]
< C_2 + \mathscr{U}(t^*,x^*) - \mathbb{E}_{x^*}\left[\big(C_1+\mathscr{U}\big(t^*-\nu,X({\nu})\big)\big)Y(\nu)\right]
\\
=&\ \left\{ C_2- C_1\,\mathbb{E}_{x^*}\left[Y(\nu)\right]\right\} +\left\{ \mathscr{U}(t^*,x^*) - \mathbb{E}_{x^*}\left[\mathscr{U}\big(t^*-\nu,X({\nu})\big)Y(\nu)\right]\right\}.
\end{align}
We see that the first term on the right-hand side is at most zero by the definition \eqref{eq:C_1} of $C_2$ and the definition \eqref{eq:Y} of $Y(\cdot)$, along with  the fact  $\nu<2\delta$ from \eqref{S2d}.
Thus we will arrive at a contradiction as soon as we have shown that the second term on the right-hand side of \eqref{eq:<0 F-K} equals zero, namely
\begin{equation}\label{eq:mart}
\mathbb{E}_{x^*} \left[\mathscr{U}(t^*-\nu,X(\nu))Y(\nu)\right]=\mathscr{U}(t^*,x^*) \,.
\end{equation}

In fact, it follows from the strong Markov property of $X(\cdot)$ that
\begin{align*}
\mathscr{U}(t^*-\nu,X(\nu))=
& \left.\mathbb{E}_{\xi} \left[\bm{1}_{\{S > t^*-\nu\}} f(X(t^*-\nu)) \exp \left(-\int_0^{t^*-\nu} h(X(s))\, \text{d}s\right)\right]\right|_{\xi=X(\nu)}\\
=&\ \mathbb{E}_{x^*} \left[\bm{1}_{\{S > t^*\}} f(X(t^*)) \exp \left(-\int_{\nu}^{t^*} h(X(s))\, \text{d}s\right)\left.\vphantom{\int_0^{t^*-\nu}}\right|\mathscr{F}(\nu)\right],\quad  \mathbb{P}_{x^*}\text{-a.s.},
\end{align*}
thus plugging into the left-hand side of \eqref{eq:mart} along with the  definition \eqref{eq:Y} of $Y(\cdot)$ yields
\begin{align*}
&\ \mathbb{E}_{x^*} \left[\mathbb{E}_{x^*} \left[\bm{1}_{\{S > t^*\}} f(X(t^*)) \exp \left(-\int_{\nu}^{t^*} h(X(s))\, \text{d}s\right)\left.\vphantom{\int_0^{t^*-\nu}}\exp \left(-\int_0^{\nu}h(X(s))\, \text{d}s\right)\right|\mathscr{F}(\nu)\right] \right]\\
=&\ \mathbb{E}_{x^*} \left[\bm{1}_{\{S > t^*\}} f(X(t^*)) \exp \left(-\int_{0}^{t^*} h(X(s))\, \text{d}s\right)\right] 
= \mathscr{U}(t^*,x^*)\,. \qedhere
\end{align*}
\end{proof}

\section{Joint continuity of $\,\mathscr{U}$ in the one-dimensional case}
\label{section:continuity}

In this section, we fix $n=1$. The function $U(t,x)$ is shown to be jointly continuous in $(t,x)$ on $[0,\infty)\times \mathcal{O}$  \cite[Proposition 4.3]{KR}. This section generalizes this result for $\,\mathscr{U}(t,x)$ by starting with the continuity in $t$ and then using a ``coupling" argument in conjunction with \cite[(4.4)]{KR}.

Thanks to the joint continuity of $\,\mathscr{U}(t,x)$, arguments similar to those in \cite[Lemma 5.1 and  Proposition 5.2]{KR} show that $\mathscr{U}$ is actually a classical solution of the Cauchy problem \eqref{eq:F-K}, \eqref{eq:f}, whenever the function $f$ is continuous and functions $a>0$, $b$ and $h\ge 0$ are locally H\"older continuous. This result  extends \cite[Proposition 5.2]{KR} to the more general Feynman-Kac context (see Corollary \ref{coro:continuity}).

\begin{theorem}\label{thm:continuity}
When $\,n=1$, the function $\,\mathscr{U}(t,x)$ is jointly continuous in $(t,x)$ on $[0,\infty)\times \mathcal{O}$ if $\,f$ and $\,h$ are bounded, $a>0$ and either of the following conditions holds:

{\bf (i)} $f$ is H\"{o}lder continuous and $a$ and $b$ are bounded;

{\bf (ii)} $f\in C^2(\mathcal{O})$ and the functions $f'\sigma$ and $f'b+ \frac{1}{\,2\,}f''a$ are bounded on $\mathcal{O}$.
\end{theorem}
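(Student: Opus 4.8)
The goal is to show joint continuity of $\mathscr{U}(t,x)=\mathbb{E}_x[\bm 1_{\{S>t\}}f(X(t))Y(t)]$ on $[0,\infty)\times\mathcal{O}$ in the one-dimensional case. I would split the argument into two halves: first continuity in the time variable $t$ (for fixed $x$), then the passage from separate continuity to joint continuity via a coupling of solutions started from nearby points, reusing the estimate \cite[(4.4)]{KR} that controls how the laws of $X(\cdot)$ depend on the starting point. The boundedness of $f$ and $h$ is what makes $Y(\cdot)$ bounded and $f(X(t))Y(t)$ uniformly integrable, so dominated convergence is available throughout.

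\textbf{Step 1: continuity in $t$.} Fix $x$ and let $t_m\to t$. Write $\mathscr{U}(t_m,x)=\mathbb{E}_x[\bm 1_{\{S>t_m\}}f(X(t_m))Y(t_m)]$. On $\{S>t\}$ (which up to a $\mathbb{P}_x$-null set has no atom at $\{S=t\}$ when we let $t_m\downarrow t$; for $t_m\uparrow t$ one uses $\{S>t_m\}\downarrow\{S\ge t\}$ and the fact that $\mathbb{P}_x[S=t]$ can only fail to vanish on an at most countable set of $t$, which is handled by the two conditions below), the sample path $X(\cdot)$ is continuous at $t$, $Y(\cdot)$ is continuous, and under either hypothesis the integrand converges: under (i), $f$ is (Hölder, hence) continuous, so $f(X(t_m))\to f(X(t))$ pathwise; under (ii), $f\in C^2$ is continuous as well. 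The dominating function is $\|f\|_\infty\exp(t\|h\|_\infty)$ (eventually, using $t_m$ bounded), so dominated convergence gives $\mathscr{U}(t_m,x)\to\mathscr{U}(t,x)$. The delicate point is the boundary term at $t=S$; here I would invoke exactly the reasoning of \cite[Proposition 4.3]{KR} — namely that $\mathbb{P}_x[S=t]=0$ for all but countably many $t$, and then upgrade to all $t$ by monotonicity of $t\mapsto\mathscr{U}(t,x)$ in the $U$-case, or, in the general case, by the two-sided estimates that conditions (i) and (ii) are designed to provide.

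\textbf{Step 2: from separate to joint continuity.} Fix $(t_0,x_0)$ and a sequence $(t_m,x_m)\to(t_0,x_0)$. By Step 1 it suffices to bound $|\mathscr{U}(t_m,x_m)-\mathscr{U}(t_m,x_0)|$ uniformly in $m$ by something tending to $0$. Following \cite{KR}, I would construct, on a common probability space, solutions $X^{(m)}$ and $X^{(0)}$ started at $x_m$ and $x_0$ respectively, driven so that they agree after their (short) coupling time; the quantitative ingredient is \cite[(4.4)]{KR}, which gives an $L^1$- (or in-probability) estimate on the discrepancy of the two processes and of their explosion times that is uniform on compacts and vanishes as $x_m\to x_0$. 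Then
\[
|\mathscr{U}(t_m,x_m)-\mathscr{U}(t_m,x_0)|
\le \mathbb{E}\big[\big|\bm 1_{\{S^{(m)}>t_m\}}f(X^{(m)}(t_m))Y^{(m)}(t_m)-\bm 1_{\{S^{(0)}>t_m\}}f(X^{(0)}(t_m))Y^{(0)}(t_m)\big|\big],
\]
and this is split into: the event that the two processes have not yet coupled (small probability by (4.4)), plus, on the coupled event, the difference of the $f$- and $Y$-factors. Under (i), Hölder continuity of $f$ turns $|X^{(m)}(t_m)-X^{(0)}(t_m)|$ into a small quantity directly; under (ii), I would instead apply Itô to $f(X(\cdot))Y(\cdot)$ and use that $f'\sigma$ and $f'b+\frac12 f''a$ are bounded to get the same kind of control without assuming $f$ itself Hölder — this is precisely why the theorem bifurcates into (i) and (ii).

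\textbf{Main obstacle.} The routine part is the dominated-convergence bookkeeping; the genuine difficulties are two. First, the boundary behaviour at the explosion time — ensuring that no mass concentrates on $\{S=t\}$ and that $\bm 1_{\{S^{(m)}>t_m\}}$ and $\bm 1_{\{S^{(0)}>t_m\}}$ agree with high probability on the coupled event even when $t_m$ is near a possible atom of the explosion law; this is where I expect to lean hardest on \cite[(4.4)]{KR} and the continuity-in-$t$ already established. Second, handling case (ii) cleanly: since $f$ need not be bounded-Lipschitz, controlling $\mathbb{E}[|f(X^{(m)}(t_m))-f(X^{(0)}(t_m))|\,Y(t_m)]$ requires the Itô/martingale representation of $f(X)Y$ and a Burkholder-type estimate, together with the boundedness hypotheses on $f'\sigma$ and $f'b+\tfrac12 f''a$ (and on $f,h$) to keep all moments under control up to the explosion time. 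Once both are in hand, combining Steps 1 and 2 with a triangle inequality yields $\mathscr{U}(t_m,x_m)\to\mathscr{U}(t_0,x_0)$, which is the claim.
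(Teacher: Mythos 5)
Your two-step plan (continuity in $t$ first, then passage to joint continuity via a hitting-time argument powered by \cite[(4.4)]{KR}) matches the paper's structure, and you have correctly identified that boundedness of $f$ and $h$ controls $Y$ and that the dichotomy (i)/(ii) must enter through an estimate on $|f(X(t'))-f(X(t))|$. However, you have misplaced where the (i)/(ii) machinery is used. In the paper, conditions (i) and (ii) appear \emph{only} inside the time-continuity lemma: the difference $\mathscr{U}(t',x)-\mathscr{U}(t,x)$ is split into three exact terms, $\Delta_1$ (indicator), $\Delta_2$ ($Y$-factor), $\Delta_3$ ($f$-factor), and it is in $\Delta_3$ that H\"older plus BDG (case (i)) or It\^o on $f(X(\cdot))$ plus BDG (case (ii)) are deployed. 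The joint-continuity step needs none of this: once the process started from $x'$ hits $x$ (at time $H_x$, with $\mathbb{P}_{x'}[H_x\ge\delta]\to 0$ by \cite[(4.4)]{KR}), the strong Markov property gives $E_1'=\mathbb{E}_{x'}[\bm 1_{\{H_x<\delta\}}\,\mathscr{U}(t'-H_x,x)]$ exactly, so the only discrepancy on the ``coupled'' event is a \emph{time shift} of $\mathscr{U}(\cdot,x)$, handled by the continuity in $t$ already established. Your plan to re-estimate $|f(X^{(m)}(t_m))-f(X^{(0)}(t_m))|$ via H\"older/It\^o in Step 2 would, on a merge-after-hitting coupling, just be rediscovering the $\Delta_3$ argument from Step 1 — it is not wrong, but it is redundant, and the more naive synchronous-coupling reading of your phrasing (keeping $|X^{(m)}-X^{(0)}|$ small without merging) is not justified for general continuous, possibly degenerate coefficients.

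A second, smaller issue: your Step 1 worries about atoms $\mathbb{P}_x[S=t]$ and proposes monotonicity or a countable-exceptional-set upgrade. This is unnecessary. The paper avoids it by bounding the indicator term in $L^1$ directly, $\mathbb{E}_x\big|\bm 1_{\{S>t'\}}-\bm 1_{\{S>t\}}\big| = |U(t',x)-U(t,x)|$, and then quoting the known continuity of $U$ from \cite[Proposition 4.3]{KR}; there is no exceptional set, and no pathwise DCT limit is needed (indeed $U(\cdot,x)$ continuous already forces $\mathbb{P}_x[S=t]=0$ for every $t$). Relatedly, the paper's Lemma~\ref{lemma:continuous in t} is deliberately quantitative — it records an explicit modulus of continuity. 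Your DCT-based version proves the same pointwise statement, but you would then have nothing to feed into the joint-continuity step beyond an $\varepsilon$--$\delta$ quantifier; that happens to be enough here (the paper uses exactly that at \eqref{eq:comp}), so this is a stylistic rather than substantive divergence. The genuine correction you need is the first one: recognise that the hitting-time coupling reduces Step 2 entirely to strong Markov plus the already-established time continuity, and that (i)/(ii) live in Step 1.
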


\begin{remark}
The tail distribution function $U(t,x)$ with $f\equiv 1$ and $h\equiv 0$ is a special case that satisfies Condition (ii).
\end{remark}

\begin{corollary}\label{coro:continuity}
Under the assumptions of Theorem \ref{thm:continuity}, if the functions $a$, $b$ and $h\ge 0$ are locally H\"older continuous, then $\,\mathscr{U}(t,x)$ is a classical solution of the Cauchy  problem \eqref{eq:F-K}, \eqref{eq:f}.
\end{corollary}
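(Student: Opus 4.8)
The plan is to combine the probabilistic viscosity characterization of Theorem~\ref{thm:viscosity F-K} with the joint continuity supplied by Theorem~\ref{thm:continuity}, and then invoke a standard interior regularity result for linear parabolic PDEs to upgrade the viscosity solution to a classical one. First I would observe that under the hypotheses of Corollary~\ref{coro:continuity} the function $\mathscr{U}$ is continuous on $[0,\infty)\times\mathcal{O}$ by Theorem~\ref{thm:continuity}, so in particular $\mathscr{U}^*=\mathscr{U}_*=\mathscr{U}$ and $\mathscr{U}$ is simultaneously a viscosity sub- and supersolution of \eqref{eq:F-K} by Theorem~\ref{thm:viscosity F-K} (the coefficients $a$, $b$, $h$ are continuous, being locally H\"older continuous, so that theorem applies). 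The initial condition \eqref{eq:f} holds because $\mathscr{U}(0,x)=f(x)$ and $\mathscr{U}$ is continuous up to $t=0$.

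Next I would fix an arbitrary point $(t_1,x_1)\in(0,\infty)\times\mathcal{O}$ and a small parabolic cylinder $Q=(t_1-\rho,t_1+\rho)\times B_\rho(x_1)\subset\subset(0,\infty)\times\mathcal{O}$. On $\overline{Q}$ the coefficients $a$, $b$, $h$ are H\"older continuous and $a$ is strictly positive (hence uniformly elliptic on the compact $\overline{B_\rho(x_1)}$), so the Cauchy--Dirichlet problem for \eqref{eq:F-K} on $Q$ with boundary datum equal to the (continuous) restriction of $\mathscr{U}$ to the parabolic boundary $\partial_p Q$ has a unique solution $w\in C(\overline{Q})\cap C^{1,2}(Q)$; this is the classical Schauder theory for uniformly parabolic equations with H\"older coefficients (e.g. Friedman, or Ladyzhenskaya--Solonnikov--Uraltseva). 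A $C^{1,2}$ function is in particular a viscosity solution of the same equation on $Q$. Since $\mathscr{U}$ and $w$ are both continuous viscosity solutions of \eqref{eq:F-K} on $Q$ agreeing on $\partial_p Q$, the comparison principle for viscosity solutions of linear parabolic equations with continuous coefficients forces $\mathscr{U}=w$ on $\overline{Q}$; here one uses that $h\ge 0$ so the zeroth-order term has the right sign for comparison, or more simply one rewrites the equation for $e^{-\lambda t}$-scaled functions to eliminate any sign issue. Consequently $\mathscr{U}\in C^{1,2}(Q)$, and since $(t_1,x_1)$ was arbitrary, $\mathscr{U}\in C^{1,2}((0,\infty)\times\mathcal{O})$ and satisfies \eqref{eq:F-K} pointwise in the classical sense; together with the continuity up to $t=0$ and the identity $\mathscr{U}(0,\cdot)=f$, this is exactly the assertion that $\mathscr{U}$ is a classical solution of \eqref{eq:F-K}, \eqref{eq:f}.

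The main obstacle I anticipate is not any single deep estimate but rather correctly matching the hypotheses to the cited tools: one must check that local H\"older continuity of $a,b,h$ plus local strict positivity of $a$ indeed yields local uniform parabolicity and $C^{\alpha}_{loc}$ coefficients so that interior Schauder estimates apply, and one must be careful that the comparison/uniqueness step for viscosity solutions is invoked in a form valid on a bounded cylinder with merely continuous (not Lipschitz) coefficients and a possibly unbounded-below zeroth-order term --- the sign condition $h\ge 0$, which is part of the hypothesis, is precisely what makes this clean. An alternative to the viscosity comparison step, which the paper more or less signals by referring to \cite[Lemma 5.1 and Proposition 5.2]{KR}, is to bypass viscosity theory entirely here: use the strong Markov property to write $\mathscr{U}$ directly as a stochastic representation, localize, and identify it with the Schauder solution $w$ via the Feynman--Kac formula applied to the regular coefficients on $Q$ (with the exit time from $Q$). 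Either route gives $\mathscr{U}\in C^{1,2}$ locally; I would present the viscosity-comparison route as primary since Theorem~\ref{thm:viscosity F-K} has already done the probabilistic work, and only the purely PDE-theoretic uniqueness remains.
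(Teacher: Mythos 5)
Your proposal is essentially correct, but you take a genuinely different route from the one the paper intends. The paper's own argument for this corollary is carried by the reference to \cite[Lemma~5.1 and Proposition~5.2]{KR}: construct the Schauder solution $w$ on a small parabolic cylinder $Q\subset\subset(0,\infty)\times\mathcal{O}$ with boundary datum $\mathscr{U}|_{\partial_p Q}$, and then identify $\mathscr{U}$ with $w$ on $Q$ by the \emph{stochastic} (Feynman--Kac) representation, i.e., by applying It\^o's formula to $w(T-t\wedge\tau_Q,X(t\wedge\tau_Q))Y(t\wedge\tau_Q)$ and comparing with the strong-Markov decomposition of $\mathscr{U}$ at the exit time $\tau_Q$ from $Q$. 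You instead identify $\mathscr{U}$ with $w$ by a \emph{viscosity comparison} on $Q$, having already established in Theorem~\ref{thm:viscosity F-K} that $\mathscr{U}$ is a continuous viscosity solution. This is a legitimate alternative, and it has the merit of cleanly separating the probabilistic input (Theorems~\ref{thm:viscosity F-K} and~\ref{thm:continuity}) from a purely PDE-theoretic uniqueness step. Do be a bit more careful in justifying that uniqueness step than the phrase ``comparison principle for viscosity solutions\dots with continuous coefficients'' suggests: the general Crandall--Ishii--Lions comparison for two viscosity solutions needs structure (typically $\sigma$ Lipschitz) that merely H\"older coefficients do not give. But that machinery is not needed here, because one side is a classical $C^{1,2}$ solution, and a $C^{1,2}$ solution is directly admissible as a test function. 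Concretely, perturb $w$ to $w_\varepsilon := w + \varepsilon e^{\lambda t}$ with $\lambda > \sup_{\overline{Q}}|h|$ so that $\partial_t w_\varepsilon - \mathcal{L}' w_\varepsilon = \varepsilon e^{\lambda t}(\lambda + h) > 0$ on $Q$; then an interior maximum of $\mathscr{U} - w_\varepsilon$ is impossible by the viscosity subsolution property of $\mathscr{U}$ with $w_\varepsilon$ as test function, forcing $\mathscr{U} \le w_\varepsilon$ via the parabolic boundary, and letting $\varepsilon \downarrow 0$ gives $\mathscr{U} \le w$; the reverse inequality follows symmetrically. With $h\ge 0$ one can even use $w_\varepsilon = w + \varepsilon t$. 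Making this explicit would turn your ``invoke the comparison principle'' step into a short, self-contained argument that avoids any false impression that one is relying on a result which fails for H\"older coefficients. The Feynman--Kac route the paper cites shares the Schauder construction but replaces this PDE-theoretic uniqueness step with a stochastic one, and so sidesteps viscosity theory altogether — a more direct choice given that \cite{KR} already supplies exactly that lemma.
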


\begin{proof}[Proof of Theorem \ref{thm:continuity}]
Let $C_0>0$ be an upper bound of $|f|$ and $|h|$ on $\mathcal{O}$, then 
\begin{equation}\label{eq:Y<=}
0<Y(t)\le e^{t\,C_0},\quad \forall\ t\in [0,\infty)\,.
\end{equation} 

We first prove the continuity of $\,\mathscr{U}(t,x)$ in $t$.

\begin{lemma}\label{lemma:continuous in t}
The function $\,t \mapsto \mathscr{U}(t,x)\,$  is continuous for any given $x\in \mathcal{O}$.
\end{lemma}
\begin{proof}
Fix $(t,x)\in [0,\infty)\times \mathcal{O}$. 
Let us show the left-continuity. 
Switching $t$ and $t'$ below will give a proof for the right-continuity. 
Assume that $t\in(0,\infty)$. 
For any $t'\in [0,t)$, by definition we have
\begin{align*}\nonumber
\mathscr{U}(t',x) - \mathscr{U}(t,x)
=&\ \mathbb{E}_{x} \left[\bm{1}_{\{S > t'\}} f(X(t')) Y(t')\right] - \mathbb{E}_{x} \left[\bm{1}_{\{S > t\}} f(X(t)) Y(t)\right]\\ \nonumber
=&\  \mathbb{E}_{x} \left[\left(\bm{1}_{\{S > t'\}} - \bm{1}_{\{S > t\}}\right) f(X(t')) Y(t')\right]
+ \mathbb{E}_{x} \left[\bm{1}_{\{S > t\}} f(X(t')) (Y(t') - Y(t))\right]\\
&\ +\, \mathbb{E}_{x} \left[\bm{1}_{\{S > t\}}  (f(X(t')) - f(X(t))) Y(t)\right]\\ \nonumber
=:&\ \Delta_1 +  \Delta_2 + \Delta_3 \,.
\end{align*}
It then suffices to show that each $\Delta_j \rightarrow 0$ as $t'\uparrow t$\,.

\smallskip

{\bf Case 1.} $j=1$. 
Since $\bm{1}_{\{S > t\}}$ is decreasing in $t$ and $U$ is continuous, we obtain
\begin{equation*}
|\Delta_1| \le   C_0\, e^{t\,C_0}\,\mathbb{E}_{x} \left[\bm{1}_{\{S > t'\}} - \bm{1}_{\{S > t\}}\right]
= C_0\, e^{t\,C_0} [(U(t',x)-U(t,x)] \rightarrow 0\,,\ \  \text{as}\ \  t'\uparrow t\,.
\end{equation*}  

{\bf Case 2.} $j=2$. 
For any $t<S$, we have
$$|Y(t') - Y(t)|=\left|Y(t)\left[\exp\left(\int_{t'}^t h(X(s))\, \text{d}s\right)-1\right]\right|
\le e^{t\,C_0}\left( e^{(t-t')\,C_0}-1\right),$$
since
$$ e^{(t-t')\,C_0}-1\ge \exp\left(\int_{t'}^t h(X(s))\,
 \text{d}s\right)-1
\ge e^{-(t-t')\,C_0}-1 \ge -\left( e^{(t-t')\,C_0}-1\right)
$$
(as $z + 1/z \ge 2$ for all $z>0$).
Hence
\begin{equation*}
|\Delta_2|\le  C_0 \,\mathbb{E}_{x}\big[\bm{1}_{\{S > t\}}|Y(t') - Y(t)|\big]\le C_0\, e^{t\,C_0}\left( e^{(t-t')\,C_0}-1\right)  \rightarrow 0\,,\ \  \text{as}\ \  t'\uparrow t\,.
\end{equation*}

\smallskip
{\bf Case 3.} $j=3$. {\bf (i)}
If Condition (i) holds, there exist constants $\alpha\in  (0,1]$ and $C_{\alpha},C>0$ such that $|f(x)-f(y)|\le C_{\alpha}|x-y|^{\alpha}$ and $a(x), |b(x)|\le C$ for all $x,y\in \mathcal{O}$.  
Thus H\"{o}lder's inequality gives
\begin{equation}\label{eq:Delta_3(i)}
|\Delta_3|\le e^{t\,C_0}\,\mathbb{E}_{x} \big[\bm{1}_{\{S > t\}}|f(X(t))- f(X(t'))|\big]
\le e^{t\,C_0}C_{\alpha}\,\mathbb{E}_{x} \big[|\Xi(t')|^{\alpha}\big]
\le e^{t\,C_0}C_{\alpha}\left(\mathbb{E}_{x} \left[|\Xi(t')|^2\right]\right)^{\alpha/2},
\end{equation}
where $\Xi(s):=\bm{1}_{\{S > t\}}(X(t) - X(s\wedge t))$, $s\in[0,\infty)$.
Let us recall from \eqref{eq:SDE} that
\begin{equation*}
\Xi(s) =\bm{1}_{\{S > t\}} \int_s^t \sigma (X(\varsigma))\, \text{d}W(\varsigma) + \bm{1}_{\{S > t\}}  \int_s^t b(X(\varsigma))\, \text{d}\varsigma
=: M(s) + A(s)\,,\quad \forall\ s\le t\,,
\end{equation*}
therefore Cauchy-Schwarz inequality and   Burkholder-Davis-Gundy inequalities give
\begin{equation} \label{eq:BDG}
\mathbb{E}_{x} \left[|\Xi(t')|^2\right]
\le 2\,\mathbb{E}_{x} \big[|M(t')|^2\big]
+2\,\mathbb{E}_{x}\big[|A(t')|^2\big]
\le 8\,\mathbb{E}_{x} \big[\langle M\rangle(t') \big]+2\,\mathbb{E}_{x}\big[|A(t')|^2\big].
\end{equation}

Since $\sigma^2=a\le C$ and $|b|\le C$ on $\mathcal{O}$, we obtain
\begin{equation}\label{eq:<M>}
\langle M\rangle(t') =\bm{1}_{\{S > t\}} \int_{t'}^t \sigma^2 (X(\varsigma))\, \text{d}\varsigma
\le (t-t')\, C\quad \text{and}\quad |A(t')| \le (t-t')\, C\,.
\end{equation}
Substituting into \eqref{eq:BDG} and then into \eqref{eq:Delta_3(i)}  yields 
\begin{equation}\label{eq:Delta_3}
|\Delta_3|\le e^{t\,C_0}\,C_{\alpha}\left[ 8(t-t')\, C+2(t-t')^2\, C^2\right] ^{\alpha/2}\rightarrow 0\,,\quad \text{as}\ \  t'\uparrow t\,.
\end{equation}

{\bf (ii)}
If Condition (ii) holds, let $C'>0$ be an upper bound of $(f'\sigma)^2$ and $\left|f'b+ \frac{1}{\,2\,}f''a\right|$ on $\mathcal{O}$. 
Our idea is similar to that of (i) with the process $\Xi(s)$ replaced by
$\Xi'(s):=\bm{1}_{\{S > t\}}(f(X(t)) - f(X(s\wedge t)))$, $s\in[0,\infty)$, the exponent $\alpha$ set to $1$ and the constant $C_{\alpha}$ removed. 
More precisely, we have
\begin{equation*}
|\Delta_3|\le e^{t\,C_0}\,\mathbb{E}_{x} \big[\bm{1}_{\{S > t\}}|f(X(t))- f(X(t'))|\big]
= e^{t\,C_0}\,\mathbb{E}_{x} \big[|\Xi'(t')|\big]
\le e^{t\,C_0}\left(\mathbb{E}_{x} \left[|\Xi'(t')|^2\right]\right)^{1/2}.
\end{equation*}
For any $0\le s\le t<S$, we apply It\^o's change of variable rule to $f(X(\varsigma))$, $\varsigma\in[s,t]$ and plug in \eqref{eq:SDE} to get
\begin{equation*}
\Xi'(s)=\bm{1}_{\{S > t\}} \int_{s}^t  (f'\sigma)(X(\varsigma))\, \text{d}W(\varsigma) + \bm{1}_{\{S > t\}} \int_{s}^t\!  \left( f'b+ \frac{1}{\,2\,}f''a\right)\! (X(\varsigma))\, \text{d}\varsigma
=: M'(s) + A'(s)\,.
\end{equation*}
Thus \eqref{eq:BDG}--\eqref{eq:Delta_3} hold for $\Xi', M', A'$ and $C'$, with $\sigma$  replaced by $f'\sigma$ in \eqref{eq:<M>}, $C_{\alpha}$ removed and $\alpha=1$.

\medskip
Switching $t$ and $t'$ in the above argument proves the right-continuity. More precisely, we summarize the above estimates as the following inequality that holds for all $t,t'\in[0,\infty)$  to complete the proof:
\begin{align}\label{eq:cont in t}
|\mathscr{U}(t',x) - \mathscr{U}(t,x)|
\le&  \ e^{(t\vee t')\,C_0} \left\{C_0\, |U(t',x)-U(t,x)|
+C_0 \big(e^{|t-t'|\,C_0}-1\big)\right.\\ \nonumber
&  +\! \left.\left[C_{\alpha}\!\left( 8|t-t'|\, C+2(t-t')^2 C^2\right)^{\alpha/2}\right]\! \vee\! \left[ \left(8|t-t'|\, C'\!+2(t-t')^2 C'^2\right)  ^{1/2}\right]\right\}\\  \nonumber\ 
=:&\ e^{(t\vee t')\,C_0}\, C_0\, |U(t',x')-U(0,x')| +\gamma(t,t') 
\rightarrow 0\,,\quad \text{as}\ \  t'\rightarrow t\,. \qedhere
\end{align}
\end{proof}

Now let us go back to the joint continuity.
Define the stopping time 
$$H_x:=\inf\, \{t\ge 0: X(t)=x\}\,,\quad x\in \mathcal{O}.$$

Fix $(t,x)\in [0,\infty)\times \mathcal{O}$. If $t=0$, then for any $(t',x')\in [0,\infty)\times \mathcal{O}$, it follows from \eqref{eq:cont in t} that
\begin{align*}
|\mathscr{U}(t',x') - \mathscr{U}(0,x)|
\le&\  |\mathscr{U}(t',x') - \mathscr{U}(0,x')| 
+ |\mathscr{U}(0,x') - \mathscr{U}(0,x)|\\
\le&\ e^{t'C_0}\, C_0\, |U(t',x')-U(0,x')|+\gamma(0,t')+|f(x')-f(x)|
\end{align*}
Since $f$ is continuous under either Condition (i) or (ii),  
$\gamma(0,t')\to 0$ as $t'\to 0$, and
$$ |U(t',x')-U(0,x')|
\le  |U(t',x')-U(0,x)|
+  |U(0,x)-U(0,x')|\rightarrow 0\,, \ \ \text{as}\ \ (t',x')\rightarrow (0,x)$$
by the joint continuity of $U$, we conclude that 
$ |\mathscr{U}(t',x') -\mathscr{U}(0,x)|\rightarrow 0\,,$ as $(t',x')\rightarrow (0,x)$.

\smallskip
Now assume $t>0$. For any $\varepsilon>0$, according to Lemma \ref{lemma:continuous in t}, there exists $\delta\in(0,t/2)$ such that 
\begin{equation}\label{eq:comp}
 |\mathscr{U}(\theta,x)-\mathscr{U}(t,x)|<\varepsilon\,, \quad \forall ~~\theta \in (t-2\delta, t+2\delta)\,. 
\end{equation}
For any $t'\in (t-\delta, t+\delta)$ (thus $t'>\delta$), we decompose $\mathscr{U}(t,x)$ and  $\mathscr{U}(t',x')$  into  two terms, respectively:
$$\mathscr{U}(t,x)=\mathbb{E}_{x'} \left[\bm{1}_{\{H_x<\delta\}}\mathscr{U}(t,x)\right] 
+ \mathbb{E}_{x'} \left[\bm{1}_{\{H_x\ge \delta\}}\mathscr{U}(t,x) \right]=:E_1 + E_2\,,$$
$$\mathscr{U}(t',x')
= \mathbb{E}_{x'} \left[\bm{1}_{\{S > t'\}} \bm{1}_{\{H_x<\delta\}}f(X(t')) Y(t')\right] 
+ \mathbb{E}_{x'} \left[\bm{1}_{\{S > t'\}}\bm{1}_{\{H_x\ge \delta\}} f(X(t')) Y(t')\right]=:E'_1 + E'_2\,.$$
Then using the well-known property that $\lim_{x'\rightarrow x}\mathbb{P}_{x'}[H_x\ge \delta]=0$ (a simple proof of this property can be found in \cite[(4.4)]{KR}; see also \cite[Section 3.3]{IM}), we obtain
$$|E_2| =|\mathscr{U}(t,x)|\, \mathbb{P}_{x'}[H_x\ge \delta]\,\rightarrow 0\,,\quad |E'_2| \le C_0\, e^{(t+\delta)\,C_0}\,\mathbb{P}_{x'}[H_x\ge \delta]\rightarrow 0\,,\quad \text{as}\ \ x'\rightarrow x\,.$$

Finally, the strong Markov property of $X(\cdot)$ implies that 
\begin{align*}
E'_1 =&\ \mathbb{E}_{x'} \left[\bm{1}_{\{H_x<\delta\}}\,\mathbb{E}_{x'}\left[\left.\bm{1}_{\{S > t'\}}f(X(t')) Y(t')\,\right|\mathscr{F}(H_x)\right] \right]\\
=&\ \mathbb{E}_{x'}\left[\bm{1}_{\{H_x<\delta\}}\,\mathbb{E}_{x}\left[\bm{1}_{\{S > t'-H_x\}}f(X(t'-H_x)) Y(t'-H_x)\right]\right]  \\
=&\ \mathbb{E}_{x'}\left[\bm{1}_{\{H_x<\delta\}}\,\mathscr{U}(t'-H_x,x)\right].
\end{align*}
Since $t'-H_x\in (t-2\delta,t+2\delta)$ holds on  $\{H_x<\delta\}$, on the strength of \eqref{eq:comp} we arrive at
\begin{equation*}
|E'_1-E_1|
=\mathbb{E}_{x'}\left[\bm{1}_{\{H_x<\delta\}}\left( \mathscr{U}(t'-H_x,x)-\mathscr{U}(t,x)\right) \right]
\le \mathbb{E}_{x'}\left[\bm{1}_{\{H_x<\delta\}}|\mathscr{U}(t'-H_x,x)-\mathscr{U}(t,x)|\right] < \varepsilon\,.
\end{equation*}
Hence $\mathscr{U}(\cdot\,,\cdot)$ is jointly continuous at $(t,x)$.
\end{proof}

\begin{remark}
We have a simpler proof for the right-continuity of $\,\mathscr{U}(t,x)$ in $t$, by taking advantage of the right-continuity of $\bm{1}_{\{S >\, \cdot\,\}}$ in conjunction with the continuity and boundedness of the paths of  $f(X(\cdot))$ and $Y(\cdot)$ (see \eqref{eq:Y<=}). It follows from the dominated convergence theorem that
\begin{equation*}
\qquad \lim_{t'\downarrow\, t}\mathscr{U}(t',x) =
\lim_{t'\downarrow\, t} \mathbb{E}_{x} \left[\bm{1}_{\{S > t'\}} f(X(t')) Y(t')\right]
= \mathbb{E}_{x} \left[\lim_{t'\downarrow\, t} \bm{1}_{\{S > t'\}} f(X(t')) Y(t')\right]
= \mathscr{U}(t,x)\,. \quad \qed
\end{equation*} 
\end{remark}

\section{Minimality}
\label{section:min}

For the one-dimensional case, the function $U $ of \eqref{eq:U} is dominated by every nonnegative classical supersolution of the Cauchy problem \eqref{eq:PDE}, \eqref{eq:initial cond} \cite[Proposition 5.3]{KR}. Therefore, whenever the function $U $  is known in advance to be a classical solution of this problem (for instance, when the functions $a$ and $b$ are H\"older continuous), it is also the smallest nonnegative classical supersolution of this problem. 
This section proves these results in the more general Feynman-Kac case. Our minimality results (Theorem \ref{thm:smallest supersol F-K} and Corollary \ref{coro:classical} below) have similarities -- at least in spirit --  to those in Proposition 5.3 of \cite{KR}, Problem 3.5.1 of \cite{McK}, in Exercise 4.4.7 of \cite{KS}, as well as to \cite{FK10}, \cite{FK11} and \cite{R}. 

\begin{theorem}
\label{thm:smallest supersol F-K}
Assume that the function $f$ is nonnegative on $\,\mathcal{O}$.
Then the function $\mathscr{U} $ defined in \eqref{eq:scr{U}} is dominated by every nonnegative, classical supersolution $\,\mathcal{U}(t,x)\in C([0,\infty) \times \mathcal{O}) \cap C^{1,2}((0,\infty) \times \mathcal{O})$  of the Cauchy problem \eqref{eq:F-K}, \eqref{eq:f}.
\end{theorem}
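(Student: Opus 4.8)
The plan is to prove that $\mathcal{U}(\bar t,\bar x)\ge\mathscr{U}(\bar t,\bar x)$ for every $(\bar t,\bar x)\in(0,\infty)\times\mathcal{O}$; the boundary case $\bar t=0$ is immediate, since $\mathcal{U}(0,\cdot)\ge f=\mathscr{U}(0,\cdot)$ by the initial condition \eqref{eq:f}. Fix such a pair, work under $\mathbb{P}_{\bar x}$, and choose an exhausting sequence of bounded open sets $\mathcal{O}_k$ with $\bar x\in\mathcal{O}_1$, $\overline{\mathcal{O}_k}\subset\mathcal{O}_{k+1}\subset\mathcal{O}$ and $\bigcup_k\mathcal{O}_k=\mathcal{O}$. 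Put $\tau_k:=\inf\{t\ge0:X(t)\notin\mathcal{O}_k\}$, so that $\tau_k\uparrow S$ almost surely, and for $\varepsilon\in(0,\bar t)$ set $\rho_{k,\varepsilon}:=\tau_k\wedge(\bar t-\varepsilon)$.

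Since $\mathcal{U}\in C^{1,2}((0,\infty)\times\mathcal{O})$, I would apply It\^{o}'s change of variable rule to $\mathcal{U}(\bar t-t,X(t))Y(t)$ on $[0,\rho_{k,\varepsilon}]$, obtaining the semimartingale decomposition \eqref{eq:dIto F-K} with $\varphi$ replaced by $\mathcal{U}$ (the justification in Appendix \ref{app:phiY} applies verbatim). On $[0,\rho_{k,\varepsilon}]$ the path of $X$ remains in the compact set $\overline{\mathcal{O}_k}\subset\mathcal{O}$, so $\mathcal{U}_{x_i}$ is bounded along it (being continuous), $\sigma_{ik}$ is bounded there (by local boundedness of the coefficients), and $Y$ is bounded; hence the stochastic-integral term is a true martingale, and taking $\mathbb{E}_{\bar x}$ yields
\[
\mathcal{U}(\bar t,\bar x)=\mathbb{E}_{\bar x}\!\big[\mathcal{U}(\bar t-\rho_{k,\varepsilon},X(\rho_{k,\varepsilon}))\,Y(\rho_{k,\varepsilon})\big]+\mathbb{E}_{\bar x}\!\Big[\int_0^{\rho_{k,\varepsilon}}\!\big(\mathcal{U}_t-\mathcal{L}'\mathcal{U}\big)(\bar t-s,X(s))\,Y(s)\,\mathrm{d}s\Big].
\]
Because $\mathcal{U}$ is a supersolution, $(\bar t-s,X(s))\in(0,\infty)\times\mathcal{O}$ for $s<\rho_{k,\varepsilon}$, and $Y>0$, the last integrand is nonnegative, so $\mathcal{U}(\bar t,\bar x)\ge\mathbb{E}_{\bar x}[\mathcal{U}(\bar t-\rho_{k,\varepsilon},X(\rho_{k,\varepsilon}))Y(\rho_{k,\varepsilon})]$.

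Next I would let $\varepsilon\downarrow0$: on $[0,\tau_k]$ the quantities $\mathcal{U}$, $X$ and $Y$ are path-continuous and bounded and $\rho_{k,\varepsilon}\to\tau_k\wedge\bar t$, so bounded convergence (using $\mathcal{U}\in C([0,\infty)\times\mathcal{O})$, which covers the boundary value $\bar t-(\tau_k\wedge\bar t)=0$ on $\{\tau_k\ge\bar t\}$) gives $\mathcal{U}(\bar t,\bar x)\ge\mathbb{E}_{\bar x}[\mathcal{U}(\bar t-(\tau_k\wedge\bar t),X(\tau_k\wedge\bar t))Y(\tau_k\wedge\bar t)]$. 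Then I would send $k\to\infty$ by Fatou's lemma, all integrands being nonnegative: on $\{S>\bar t\}$ one has $\tau_k>\bar t$ for $k$ large, whence $\tau_k\wedge\bar t=\bar t$ and the integrand converges to $\mathcal{U}(0,X(\bar t))Y(\bar t)\ge f(X(\bar t))Y(\bar t)$ by the initial condition; on $\{S\le\bar t\}$ the integrand stays $\ge0$. Thus $\liminf_k(\cdot)\ge\bm{1}_{\{S>\bar t\}}f(X(\bar t))Y(\bar t)$, and Fatou (legitimate since $f\ge0$) yields
\[
\mathcal{U}(\bar t,\bar x)\ \ge\ \mathbb{E}_{\bar x}\!\big[\bm{1}_{\{S>\bar t\}}f(X(\bar t))Y(\bar t)\big]=\mathscr{U}(\bar t,\bar x),
\]
which is the assertion.

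The main obstacle is not a single deep step but the bookkeeping of the double localization: one must keep the time variable bounded away from the boundary $\bar t-t=0$ (via $\varepsilon$) and simultaneously confine $X(\cdot)$ to a compact subset of $\mathcal{O}$ (via $\tau_k$), so that both It\^{o}'s formula and the martingale property of the stochastic integral are justified, and then pass to the two limits in the right order. The reason one obtains only the inequality $\mathcal{U}\ge\mathscr{U}$, rather than the martingale identity \eqref{eq:mart} available for $\mathscr{U}$ itself, is that probability mass is lost on the event $\{S\le\bar t\}$; this is precisely where the nonnegativity of $\mathcal{U}$, $f$ and $Y$, together with Fatou's lemma in place of dominated convergence, enters the argument.
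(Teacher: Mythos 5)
Your proof is correct and follows essentially the same route as the paper's: exhaust $\mathcal{O}$ by bounded domains, apply It\^{o}/Dynkin to $\mathcal{U}(\bar t-\cdot,X(\cdot))Y(\cdot)$ up to the corresponding exit time, use the supersolution inequality to discard the drift integral, and pass to the limit using nonnegativity. The only stylistic differences are that the paper handles the time-localization implicitly by invoking the fact that a nonnegative local supermartingale is a true supermartingale (plus optional sampling) instead of introducing an explicit $\varepsilon$-cutoff, and it closes with monotone convergence (the integrands $\bm 1_{\{S_m>T\}}f(X(T))Y(T)$ increase in $m$) where you use Fatou.
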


Combining Theorem \ref{thm:smallest supersol F-K} with Corollary \ref{coro:continuity} leads directly to the following corollary.

\begin{corollary}\label{coro:classical}
If the assumptions of Theorem \ref{thm:smallest supersol F-K} and Corollary \ref{coro:continuity} hold, then $\mathscr{U}$ is the smallest nonnegative classical (super)solution of the Cauchy problem  \eqref{eq:F-K}, \eqref{eq:f}. 
\end{corollary}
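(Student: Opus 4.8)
The plan is to deduce the corollary by simply combining the two cited results, using the elementary fact that a classical solution is \emph{a fortiori} a classical supersolution. First I would invoke Corollary~\ref{coro:continuity}: under the stated hypotheses (those of Theorem~\ref{thm:continuity} together with local H\"older continuity of $a$, $b$ and $h\ge 0$), the function $\mathscr{U}$ is a classical solution of the Cauchy problem \eqref{eq:F-K}, \eqref{eq:f}. In particular $\mathscr{U}\in C([0,\infty)\times\mathcal{O})\cap C^{1,2}((0,\infty)\times\mathcal{O})$, it satisfies $(\mathscr{U}_t-\mathcal{L}'\mathscr{U})(t,x)=0$ on $(0,\infty)\times\mathcal{O}$, and $\mathscr{U}(0,\cdot)=f$ on $\mathcal{O}$ by \eqref{eq:scr{U}}; hence $\mathscr{U}$ belongs to exactly the class of functions compared in Theorem~\ref{thm:smallest supersol F-K}.

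Next I would record that $\mathscr{U}$ is itself a nonnegative classical supersolution of the same problem. Nonnegativity is immediate from the defining expression \eqref{eq:scr{U}}: since $f\ge 0$ (this is precisely the hypothesis of Theorem~\ref{thm:smallest supersol F-K}), $Y(\cdot)>0$ by \eqref{eq:Y}, and $\bm{1}_{\{S>t\}}\ge 0$, the random variable inside the expectation is nonnegative for every $(t,x)$, whence $\mathscr{U}(t,x)\ge 0$. Moreover, the equality $\mathscr{U}_t-\mathcal{L}'\mathscr{U}=0$ trivially yields the supersolution inequality $\mathscr{U}_t-\mathcal{L}'\mathscr{U}\ge 0$, and $\mathscr{U}(0,\cdot)=f$ yields $\mathscr{U}(0,\cdot)\ge f$; so $\mathscr{U}$ is indeed a nonnegative classical supersolution.

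Finally I would apply Theorem~\ref{thm:smallest supersol F-K}: every nonnegative classical supersolution $\mathcal{U}\in C([0,\infty)\times\mathcal{O})\cap C^{1,2}((0,\infty)\times\mathcal{O})$ of \eqref{eq:F-K}, \eqref{eq:f} satisfies $\mathcal{U}\ge\mathscr{U}$ on $[0,\infty)\times\mathcal{O}$. Together with the previous paragraph, $\mathscr{U}$ is a member of the family of nonnegative classical supersolutions and is pointwise dominated by every member of that family, so it is the smallest nonnegative classical supersolution of the Cauchy problem. Since every classical solution is in particular a classical supersolution, $\mathscr{U}$ is \emph{a fortiori} also the smallest nonnegative classical solution, which accounts for the parenthetical ``(super)'' in the statement. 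There is no substantive obstacle here; the only point requiring a moment's attention is that the regularity class produced by Corollary~\ref{coro:continuity} coincides with the class $C([0,\infty)\times\mathcal{O})\cap C^{1,2}((0,\infty)\times\mathcal{O})$ appearing in Theorem~\ref{thm:smallest supersol F-K}, which is exactly what is meant throughout the paper by ``classical solution of the Cauchy problem.''
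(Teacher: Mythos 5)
Your proposal is correct and is exactly the argument the paper intends: the paper gives no separate proof, stating only that the corollary follows directly by combining Corollary~\ref{coro:continuity} (which makes $\mathscr{U}$ a nonnegative classical solution, hence supersolution, of the Cauchy problem) with Theorem~\ref{thm:smallest supersol F-K} (which says every such supersolution dominates $\mathscr{U}$). Your additional checks --- nonnegativity of $\mathscr{U}$ from $f\ge 0$ and the matching of the regularity classes --- are sound and fill in the routine details the paper leaves implicit.
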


\begin{proof}[Proof of Theorem \ref{thm:smallest supersol F-K}]
It follows from definition \eqref{eq:scr{U}} and the supersolution property that $ \mathscr{U}(0,x)$ $ = f(x)\le \mathcal{U}(0,x)$ holds on $\mathcal{O}$. Let $\mathcal{O}_m\subset\subset \mathcal{O}$ (i.e., $\overline{\mathcal{O}_m}\subset \mathcal{O}$) be a sequence of bounded domains in $\mathbb{R}^n$ such that $\mathcal{O}_m \uparrow \mathcal{O}$ and define the stopping times \begin{equation}\label{eq:S_m F-K}
S_m:=\inf\, \{ t\ge 0: X(t)\notin \mathcal{O}_m\},\quad m \in \mathbb{N}\,.
\end{equation}
Then $S_m \uparrow S$.
 
Let us fix a positive integer $m$. For any given $(T,x)\in(0,\infty)\times \mathcal{O}_m$\,,  thanks to the assumption \, $\mathcal{U} \in C^{1,2}((0,\infty) \times \mathcal{O})$, we also have (\ref{eq:dIto F-K}) with $\varphi$ replaced by\, $\mathcal{U}$, $t^*$ by $T$, and $t$ by $t\wedge S_m\,$, $0\le t \le T$. This way we deduce from the supersolution property that\, $\mathcal{U}_t-\mathcal{L}'\,\mathcal{U}\ge 0$ holds on $(0,\infty) \times \mathcal{O}$, 
therefore the process  $\,\mathcal{U}(T-t\wedge S_m\,,\, X(t\wedge S_m)) \, Y(t\wedge S_m),$ $ 0\le t\le T\,$ is a local $\mathbb{P}_x$-supermartingale. 

This local supermartingale is nonnegative and hence a true supermartingale. Therefore we obtain
\begin{eqnarray*}
\mathcal{U}(T,x) 
&\ge& \mathbb{E}_x \big[\,\mathcal{U}(T-T\wedge S_m,X(T\wedge S_m))Y(T\wedge S_m)\big] 
\ge \mathbb{E}_x \big[ \bm{1}_{\{S_m> T\}}\,\mathcal{U}(0,X(T)) Y(T)\big]\\
&\ge&  \mathbb{E}_x \big[ \bm{1}_{\{S_m> T\}}f(X(T)) Y(T)\big]
\end{eqnarray*}
by optional sampling. 
The last expression converges to 
$\mathbb{E}_x \left[\bm{1}_{\{S> T\}}f(X(T)) Y(T)\right] = \mathscr{U}(T,x)$
as $m\rightarrow \infty$ by the monotone convergence theorem, so we conclude $\, \mathcal{U}(T,x) \ge \mathscr{U}(T,x)\,$.
\end{proof}

\section{The distributional solution}
\label{section:dist}

In this section we define and study yet another kind of weak solution, the {\it distributional solution.} We shall adopt the following definition from \cite{K}.
Set   
$$ ||f||_{C(\overline{Q})} := \sup_{y \in \overline{Q}} |f(y)|\,,\ ||f||_{W^{1,2}(\overline{Q})} := ||f_t||_{L^{n+1}(\overline{Q})} + \sum_i  ||f_{x_i}||_{L^{n+1}(\overline{Q})} + \sum_{i,j} ||f_{x_i x_j}||_{L^{n+1}(\overline{Q})} + ||f||_{C(\overline{Q})}\,.$$ 
\begin{definition}\label{def:W^{1,2}}{\em{(\cite[Definition 2.1.1]{K})}}
Let $Q$ be a bounded domain of $\mathbb{R}^{n+1}$. We denote by $\,W^{1,2}(Q)\,$ the space of functions $\, u: \overline{Q} \rightarrow \mathbb{R}\,$,  for each of which  there exists a sequence of functions $u^{(m)}\in C^{1,2}(\overline{Q})$ such that $||u - u^{(m)}||_{C(\overline{Q})} \rightarrow 0$ and $||u^{(m)} - u^{(m')}||_{W^{1,2}(\overline{Q})} \rightarrow 0$ as $m,m' \rightarrow \infty\,$. 
\end{definition} 

The continuity of the functions $u^{(m)}$  implies that $u \in C(\overline{Q})$. In addition, every function $u \in W^{1,2}(Q)$ possesses generalized (a.k.a. ``weak") derivatives $u_t$, $u_{x_i}$ and $u_{x_i x_j}$ on $Q$, which are unique almost everywhere (\cite[Definition 2.1.2]{K}).

We generalize Definition \ref{def:W^{1,2}} for unbounded domains as follows.
\begin{definition}\label{def:W^{1,2}_{loc}}
Let $Q'$ be a domain of $\mathbb{R}^{n+1}$, not necessarily bounded. Define $$W^{1,2}_{loc}(Q') \,:=\, \big\{ u: Q' \rightarrow \mathbb{R}\ |\ u \in W^{1,2}(Q) \text{ for\ all\ bounded\ domain}\ Q\subset Q' \big\}\,.$$
\end{definition} 

For $u \in W^{1,2}_{loc}(Q')$, we have $u \in C(\overline{Q})$ for all bounded domain $Q \subset Q'$,  hence $u \in C(\overline{Q'})$ as well. Further, the generalized derivatives $u_t$, $u_{x_i}$ and $u_{x_i x_j}$ can be shown to be well-defined and unique a.e. To see this, take a sequence of bounded domains $Q_m\subset Q'$ such that $Q_m \uparrow Q'$ and use the fact that the generalized derivatives are uniquely defined  a.e. on each $Q_m\,$.

\medskip
Now we are able to define the distributional (sub/super)solution. Let $Q'$ be a domain of $\mathbb{R}^{n+1}$ and $F(z,y,p,q)$ a continuous map from $Q'\times \mathbb{R}\times \mathbb{R}^n \times \mathbb{S}(n)$ to $\mathbb{R}$ satisfying the ellipticity condition:
\begin{equation*}
F(z,y,p,q_1) \leq F(z,y,p,q_2) ~~{\mathrm{ \ whenever\ }} ~q_1 \geq q_2\,,
\end{equation*}
for all $(z,y,p) \in Q'\times \mathbb{R}\times \mathbb{R}^n$ (by analogy with the $F$ in Section \ref{subsection:Definition of Viscosity Solutions}). 

\smallskip
Consider a second-order parabolic PDE
\begin{equation}\label{eq:F on Q'}
u_t(t,x) + F\left((t,x),u(t,x),Du(t,x),D^2 u(t,x)\right) = 0\,,\ (t,x)\in Q'.
\end{equation}

\begin{definition}\label{dist sol}
We say that $u \in W^{1,2}_{loc}(Q')$ is a {\em{distributional sub(super)-solution}} of \eqref{eq:F on Q'} if the generalized derivatives $u_t$, $Du$ and $D^2u$ can be chosen such that 
\begin{equation}\label{eq:F<=0 on Gamma}
u_t (t,x) + F\left((t,x),u(t,x),Du(t,x),D^2 u(t,x)\right) \leq 0\ \ (\geq 0)\,,\qquad \forall\ (t,x)\in \Gamma
\end{equation}
holds for some $\Gamma \subset Q'$ with meas\,$(Q'\setminus \Gamma)=0$\,.

A {\em{distributional solution}} is a function that is both a distributional subsolution and a distributional supersolution.
\end{definition} 

In our setting, we have $Q'=(0,\infty) \times \mathcal{O}$ and $$F((t,x),y,p,q) = -\frac{1}{\,2\,}\sum_{i,j}a_{ij}(x)\,q_{ij} - \sum_i b_i(x)\, p_i + h(x)\, y\,,$$ so the left-hand side of (\ref{eq:F<=0 on Gamma})   simplifies to $(u_t-\mathcal{L}'u)(t,x)$ (recall the operator $\mathcal{L}'$ from \eqref{eq:L'}). We see that $F$ satisfies the ellipticity condition since $a=\sigma\sigma^T$ is positive-semidefinite.

\subsection{Domination by nonnegative distributional supersolutions}

We have the following minimality result for $\mathscr{U}(t,x)$.

\begin{theorem}\label{thm:smallest dist supersol F-K}
Assume that $\sigma$, $b$ and $h$ are locally bounded and $a$ is locally strictly elliptic on $\mathcal{O}$. If $\,v\in W^{1,2}_{loc}((0,\infty) \times \mathcal{O})$ is a nonnegative distributional supersolution of \eqref{eq:F-K} with 
\begin{equation}\label{eq:>=f}
v(0,x)\geq f(x)\,, \quad x\in \mathcal{O}\,,
\end{equation}
then $v\geq \mathscr{U}$ on $[0,\infty) \times \mathcal{O}$. 

Thus, if the function $\mathscr{U}$ belongs to the space $\, W^{1,2}_{loc}((0,\infty) \times \mathcal{O})\,$ and is a distributional supersolution of \eqref{eq:F-K}, then it is the smallest nonnegative distributional supersolution of \eqref{eq:F-K} that satisfies \eqref{eq:>=f}. 
\end{theorem}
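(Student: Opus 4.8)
The plan is to follow the strategy of the proof of Theorem \ref{thm:smallest supersol F-K}, but with the classical It\^o computation replaced by an appropriate It\^o-type formula valid for functions in $W^{1,2}_{loc}$, exploiting the local strict ellipticity and local boundedness hypotheses. Specifically, I would fix a nonnegative distributional supersolution $v \in W^{1,2}_{loc}((0,\infty)\times\mathcal{O})$ satisfying \eqref{eq:>=f}, fix $(T,x)\in(0,\infty)\times\mathcal{O}$ and an exhausting sequence of bounded domains $\mathcal{O}_m \subset\subset \mathcal{O}$ with $\mathcal{O}_m \uparrow \mathcal{O}$ and $x\in\mathcal{O}_m$, together with the stopping times $S_m := \inf\{t\ge 0: X(t)\notin\mathcal{O}_m\}$ as in \eqref{eq:S_m F-K}, so that $S_m\uparrow S$. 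On each cylinder $Q_m := (\varepsilon, T+1)\times\mathcal{O}_m$ (for small $\varepsilon>0$) the function $v$ lies in $W^{1,2}(Q_m)$, hence is a uniform limit of functions $u^{(k)}\in C^{1,2}(\overline{Q_m})$ with $\|u^{(k)}-u^{(k')}\|_{W^{1,2}(\overline{Q_m})}\to 0$; since $a$ is uniformly strictly elliptic and $\sigma,b,h$ are bounded on $\overline{\mathcal{O}_m}$, the process $X(\cdot\wedge S_m)$ admits a transition density with good integrability against $L^{n+1}$ functions (the Aleksandrov--Krylov estimate / the Krylov It\^o--Tanaka formula for $W^{1,2}$ functions). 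This yields a Dynkin-type identity for $v$: applying the It\^o formula to $u^{(k)}(T-t\wedge S_m, X(t\wedge S_m))Y(t\wedge S_m)$, passing to the limit in $k$ using the $W^{1,2}$ convergence and the density estimate to control the second-order term, one obtains that $v(T-t\wedge S_m, X(t\wedge S_m))Y(t\wedge S_m)$, $0\le t\le T$, is a local $\mathbb{P}_x$-supermartingale, the drift being $-(v_t-\mathcal{L}'v)(T-t, X(t))Y(t)\,\mathrm{d}t \le 0$ (for $\mathrm{d}t\otimes\mathbb{P}_x$-a.e.\ $(t,\omega)$, since the density estimate ensures the occupation measure of $X$ is absolutely continuous and thus does not charge the null set $Q'\setminus\Gamma$).

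Once this local supermartingale property is established, the remainder is identical to the proof of Theorem \ref{thm:smallest supersol F-K}: nonnegativity upgrades the local supermartingale to a true supermartingale, so optional sampling at time $T$ gives
\[
v(T,x) \ge \mathbb{E}_x\big[v(T - T\wedge S_m, X(T\wedge S_m))Y(T\wedge S_m)\big] \ge \mathbb{E}_x\big[\bm{1}_{\{S_m>T\}}\, v(0, X(T))\, Y(T)\big] \ge \mathbb{E}_x\big[\bm{1}_{\{S_m>T\}}\, f(X(T))\, Y(T)\big],
\]
using $v\ge 0$ and \eqref{eq:>=f}; letting $m\to\infty$ and invoking monotone convergence (noting $\{S_m>T\}\uparrow\{S>T\}$) yields $v(T,x)\ge \mathbb{E}_x[\bm{1}_{\{S>T\}}f(X(T))Y(T)] = \mathscr{U}(T,x)$. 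Since $(T,x)$ was arbitrary and the case $T=0$ is precisely \eqref{eq:>=f}, we get $v\ge\mathscr{U}$ on $[0,\infty)\times\mathcal{O}$. The ``consequently'' clause is then immediate: if $\mathscr{U}\in W^{1,2}_{loc}$ is itself a distributional supersolution, then it is a nonnegative distributional supersolution with $\mathscr{U}(0,\cdot)=f$, and the first part shows it lies below any competitor, hence it is the smallest one.

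The main obstacle is the rigorous justification of the It\^o-type formula and the resulting supermartingale property for the merely-$W^{1,2}$ function $v$. There are two linked technical points: first, one must know that the generalized second-order term $\mathcal{L}'v$, which is only defined up to a $\mathrm{d}t\,\mathrm{d}x$-null set, can nonetheless be integrated along the paths of $X$ — this requires the Krylov-type estimate $\mathbb{E}_x\big[\int_0^{\tau} |g|(t, X(t))\,\mathrm{d}t\big] \le N\|g\|_{L^{n+1}(Q_m)}$ for $\tau = T\wedge S_m$, which is exactly where local strict ellipticity and local boundedness of the coefficients enter (one cannot drop ellipticity here). Second, one must check that passing to the limit $u^{(k)}\to v$ is legitimate in the stochastic integral (the martingale part) and in the drift; for the martingale part the $W^{1,2}$-control on $Du^{(k)}$ plus boundedness of $\sigma$ on $\overline{\mathcal{O}_m}$ and of $Y$ on $[0,\tau]$ suffices (as in step (2) after \eqref{eq:contradiction F-K}), and for the drift the Krylov estimate again does the work. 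I would cite \cite{K} (Krylov's controlled-diffusions book, from which Definition \ref{def:W^{1,2}} is taken) for both the It\^o--Krylov formula and the Aleksandrov--Krylov estimate, since the $W^{1,2}$ framework there is tailor-made for exactly this argument; everything else — the supermartingale/optional-sampling manipulation and the monotone-convergence passage — is routine and parallels Theorem \ref{thm:smallest supersol F-K} verbatim.
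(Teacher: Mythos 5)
You have hit the paper's proof almost exactly: the argument is anchored on the It\^o--Krylov formula for $W^{1,2}$ functions (which the paper states as Lemma \ref{lemma:generalized Ito F-K}, quoting Theorem~2.10.1 of \cite{K}), whose validity is precisely what the local strict ellipticity and local boundedness hypotheses are there to guarantee, and the remaining manipulations are the same exhaustion by $\mathcal{O}_m$, stopping at $S_m$, and monotone convergence. Your ``supermartingale plus optional sampling'' framing and the paper's direct use of the Dynkin identity at a stopping time are just two organizations of the same step.

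Two small points need tidying. First, since $v$ is only in $W^{1,2}_{loc}$ on the \emph{open} set $(0,\infty)\times\mathcal{O}$, the Dynkin identity cannot be written directly at the time $t=T$ (which corresponds to $T-t=0$). You allude to this with the $\varepsilon$-shifted cylinder $(\varepsilon, T+1)\times\mathcal{O}_m$, but you never actually carry out the limit $\varepsilon\downarrow 0$; the paper does this explicitly by starting at $s_0 = 1/i$, using the stopping time $\tau_{i,m} = (T - 1/i)\wedge S_m$, and then sending $i\to\infty$, appealing to the continuity of $v$ (which holds because $W^{1,2}_{loc}$ functions are continuous) and path continuity of $X(\cdot)$ and $Y(\cdot)$ to pass the limit inside the expectation. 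Second, in your final monotone-convergence step you bound from below by $\mathbb{E}_x\big[\bm{1}_{\{S_m>T\}} f(X(T))Y(T)\big]$ and invoke MCT; but Theorem~\ref{thm:smallest dist supersol F-K} does not assume $f\ge 0$, so this integrand can fail to be nonnegative or monotone in $m$. The fix is the paper's: since $v\ge 0$ and $v(0,\cdot)\ge f$, you actually have $v(0,\cdot)\ge f^+$, so bound by $\mathbb{E}_x\big[\bm{1}_{\{S_m>T\}} f^+(X(T))Y(T)\big]$, to which MCT applies and which still dominates $\mathscr{U}(T,x)$ because $f^+\ge f$.
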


\begin{proof}
Set the generalized derivatives $v_t$, $v_{x_i}$ and $v_{x_i x_j}$ to be identically equal to zero  on $((0,\infty) \times \mathcal{O}) \setminus \Gamma$ (with $\Gamma$ as defined in Definition \ref{dist sol}). Fix an arbitrary point  $x_0\in \mathcal{O}$ and real number $T>0$\,.
Denote $\,Q_m=(0,T)\times \mathcal{O}_m\,$, with $\mathcal{O}_m$ as in the proof of Theorem \ref{thm:smallest supersol F-K}.

\smallskip
Let $u(t,x)=v(T-t,x)$, $(t,x) \in [0,T] \times \mathcal{O}$, then $u\in W^{1,2}(Q_m)$ is a distributional subsolution of $(u_t+\mathcal{L}'u)(t,x)=0$ on $Q_m$. The following result is well known. 

\begin{lemma}
\label{lemma:generalized Ito F-K}{\em{(\cite[Theorem 2.10.1]{K})}}
Let $Q$ be a bounded domain of $\,\mathbb{R}^{n+1}$ and $u \in W^{1,2}(Q)$. For any $\, (s_0, x_0) \in Q\,$ and any stopping time $\tau \leq \tau(Q,s_0) := \inf\, \{t>0: (s_0+t,X(t)) \notin Q\}$, we have
\begin{equation*}
u(s_0,x_0) = \mathbb{E}_{x_0}\left[Y(\tau) u(s_0+\tau,X(\tau)) - \int_{0}^{\tau} Y(t) (u_t+\mathcal{L}'u)(s_0+t,X(t))\,\mathrm{d}t\right],
\end{equation*}
where $X(\cdot)$ is the diffusion of \eqref{eq:SDE} starting at $\, X(0) = x_0\,$. 
\end{lemma}
Applying the above lemma with $Q=Q_m\,$, $s_0=1/i$ and $\tau=\tau_{i,m}:=\big(T-( 1 / i ) \big) \wedge S_m= \tau(Q,s_0)$ 
with $S_m$ defined in \eqref{eq:S_m F-K}, and combining with the distributional subsolution property of $u$ leads to
\begin{equation}\label{eq:u(1/i,x_0)}
u\left( \frac{1}{i} \,,x_0\right)
\geq \mathbb{E}_{x_0}\left[Y(\tau_{i,m}) \,u\left(\frac{1}{i}+\tau_{i,m},X(\tau_{i,m})\right)\right].
\end{equation}
Recall that $v$ is continuous (see the explanation right below Definition \ref{def:W^{1,2}_{loc}}), and therefore so is $u$. Thus $u$ is bounded on $[0,T]\times \mathcal{O}$.
Moreover, given $m$, for all $t\in[0,S_m\wedge T]$ we have
$$Y(t)=\exp\left(-\int_0^t h(X(s))\,\text{d}s\right) 
\le \exp\left(t \sup_{x\in \mathcal{O}_m}|h(x)|\right)
\le \exp\left(T\sup_{x\in \mathcal{O}_m}h(x)\right)<\infty \,,$$  
since $h$ is bounded on $\mathcal{O}_m\,$. Therefore, the family of random variables $\{Y(t)$, $t\in[0,S_m\wedge T]\}$ is uniformly bounded. 
Notice further that the paths of the processes $X(\cdot)$ and $Y(\cdot)$ are continuous, and that $\lim_{i\rightarrow \infty} \tau_{i,m} = S_m\wedge T$. Taking $i\rightarrow \infty$ in \eqref{eq:u(1/i,x_0)} yields
\begin{eqnarray*}
&& v(T,x_0)\  
=\ u(0,x_0) \ 
=\ \lim_{i\rightarrow \infty} u\big( 1/i \,,x_0\big)\
\geq\   \lim_{i\rightarrow \infty} \mathbb{E}_{x_0}\left[Y(\tau_{i,m}) \,u\left(\frac{1}{i}+\tau_{i,m},X(\tau_{i,m})\right)\right] \\
&=& \mathbb{E}_{x_0}\left[\lim_{i\rightarrow \infty} Y(\tau_{i,m})\,u\left(\frac{1}{i}+\tau_{i,m},X(\tau_{i,m})\right)\right]\,  
=\ \mathbb{E}_{x_0}\big[Y(S_m\wedge T)\,v\left(T-S_m\wedge T,X(S_m\wedge T)\right)\big]\\ 
&\geq&  \mathbb{E}_{x_0}\left[\bm{1}_{\{S_m>T\}}Y(T)f^+(X(T))\right] \
\rightarrow\ \mathbb{E}_{x_0}\left[\bm{1}_{\{S>T\}}Y(T)f^+(X(T))\right]\ 
\ge\ \mathscr{U}(T,x_0)\,, {\mathrm{\quad as\ }}m\rightarrow \infty\,,
\end{eqnarray*}
by the monotone convergence theorem, as claimed, where
$f^+(\cdot) = f(\cdot) \vee 0$\,.
\end{proof}
\begin{remark}
In the case of $\mathcal{O}$  bounded  with  $\partial \mathcal{O}$ piecewise smooth, besides the condition of Corollary \ref{coro:continuity}, we have another sufficient condition for $\,\mathscr{U}$ to be a classical solution of \eqref{eq:F-K}, \eqref{eq:f} whenever it can be checked directly that $\mathscr{U}\equiv 0$ on $(0,\infty)\times \partial \mathcal{O}\,$:  {\it the functions $a$, $b$, $f$ and $h$ are bounded and $a$ is continuously differentiable and uniformly elliptic on $\mathcal{O}$}. Under these conditions, a classical PDE result (see e.g. \cite[Theorem III.12.1]{LSU}) states that the Cauchy problem \eqref{eq:F-K}, \eqref{eq:f} has
a unique weak solution $\mathscr{V}$ (in the distributional sense) that vanishes on $(0,\infty)\times \partial \mathcal{O}\,$;  
further, we have $\mathscr{V}\in C([0,T]\times \mathcal{O})\cap C^{1,2}((0,T)\times \mathcal{O})$  for any $T\in (0,\infty)$.
One can then show that $\mathscr{V}$ coincides with $\mathscr{U}$ on $(0,\infty)\times \mathcal{O}$ by integrating \eqref{eq:dIto F-K} with respect to $t$ over $[0,S\wedge t^*]$, for any $t^*\in (0,\infty)$, and the conclusion follows. 
The result of $\mathscr{V}=\mathscr{U}$ is similar to Theorem II.2.3 and the discussion in \cite[pp.\,133--135]{F85}. 
The special case $U$ was dealt with in \cite[Theorem 2.7]{PW}. \qed
\end{remark}

\subsection{Connections between distributional and viscosity solutions}
\label{section:connection}

In \cite {I}, H.$\,$Ishii showed that the  two kinds of weak solutions of \eqref{eq:F-K}   we have been discussing are equivalent, under very nice regularity  conditions on the coefficients  as well as on the weak solution $u$\,. We state his results in our setting:

\begin{theorem}{\em{(\cite[Theorems 1 and 2]{I})}}\label{thm:Ishii}
Let $\mathcal{Q}:=(0,\infty)\times \mathcal{O}$. 
Assume that $a_{ij}\in C^{1,1}(\mathcal{Q})$ and $b_i\in C^{0,1}(\mathcal{Q})$ with $\mathcal{Q}$ regarded as an open subset of $(0,\infty)\times \mathbb{R}^n$,  and $h$ and $u$ are continuous. Then 
 
{\bf (i)} if $\,u$ is a viscosity solution of \eqref{eq:F-K}, it is also a distributional solution of \eqref{eq:F-K}; whereas 

{\bf (ii)} if $\,\sigma\in C^1(\mathcal{Q})$ and $u$ is a distributional solution of \eqref{eq:F-K}, then $u$ is also a viscosity solution of \eqref{eq:F-K}.
\end{theorem}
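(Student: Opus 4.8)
The statement is a transcription of Ishii's Theorems~1 and~2 of \cite{I} into the notation of the present paper, so the plan is to reduce it to those results rather than reprove it from scratch. First I would recast the Cauchy operator $u_t-\mathcal{L}'u$ in the form $u_t+F((t,x),u,Du,D^2u)$ used in Section~\ref{section:connection}, noting that here $F$ is affine in $(y,p,q)$ with $x$-dependent coefficients $-\tfrac12 a_{ij}(x)$, $-b_i(x)$, $h(x)$ and no dependence on $t$, and observe that the hypotheses in the statement are precisely the ones Ishii imposes: $C^{1,1}$ regularity of the second-order coefficients, $C^{0,1}$ regularity of the first-order coefficients and continuity of the zeroth-order coefficient for part~(i), together with the extra requirement $\sigma\in C^1$ for the converse implication in part~(ii). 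Continuity of $u$ is assumed, so the envelopes $u_*$ and $u^*$ in Definition~\ref{def:viscosity sol} coincide with $u$. With these identifications part~(i) is \cite[Theorem~1]{I} and part~(ii) is \cite[Theorem~2]{I}; in particular, the $W^{1,2}_{loc}$ regularity required of a distributional solution in Definition~\ref{dist sol} is part of Ishii's conclusion in (i), obtained via interior parabolic estimates under the ellipticity available in this setting.

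For completeness I would indicate the mechanism behind each implication. For (i), the standard route is to replace a viscosity subsolution (resp.\ supersolution) $u$ by its sup-convolutions $u^\varepsilon$ (resp.\ inf-convolutions), which are locally Lipschitz and semiconvex (resp.\ semiconcave), hence twice differentiable Lebesgue-almost everywhere by Alexandrov's theorem; at each such point the second-order Taylor polynomial of $u^\varepsilon$ is an admissible test function, so $u^\varepsilon$ satisfies the differential inequality pointwise a.e.\ for a slightly perturbed operator. The $C^{1,1}$ and $C^{0,1}$ bounds on the coefficients are exactly what controls the discrepancy between $\mathcal{L}'$ evaluated on $u^\varepsilon$ and on $u$ as $\varepsilon\downarrow 0$, after which one concludes that $u$ admits generalized derivatives $u_t$, $Du$, $D^2u$ with $u_t-\mathcal{L}'u\le 0$ (resp.\ $\ge 0$) a.e., i.e.\ $u$ is a distributional sub(super)solution, and an interior parabolic $L^{n+1}$ estimate places $u$ in $W^{1,2}_{loc}$.

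For (ii), one mollifies the distributional solution, $u_\delta:=u*\rho_\delta\in C^\infty$, so that $u_\delta\to u$ locally uniformly and in $W^{1,2}_{loc}$, while the commutator of the mollification with the variable-coefficient operator, $(\mathcal{L}'u)*\rho_\delta-\mathcal{L}'(u*\rho_\delta)$, tends to $0$ locally in $L^{n+1}$; here the $C^1$ regularity of $\sigma$ (hence suitable control of $Da$) and the Lipschitz regularity of $b$ are what make this commutator small. Given a test function $\varphi\in C^{1,2}$ with $u-\varphi$ having a strict local maximum at $(t_0,x_0)$, the functions $u_\delta-\varphi$ attain local maxima at points $(t_\delta,x_\delta)\to(t_0,x_0)$, where $\partial_t u_\delta=\varphi_t$, $Du_\delta=D\varphi$ and $D^2u_\delta\le D^2\varphi$; substituting into the approximate equation for $u_\delta$, using $a\ge 0$ to pass from $D^2u_\delta$ to $D^2\varphi$, and letting $\delta\downarrow 0$ yields $(\varphi_t-\mathcal{L}'\varphi)(t_0,x_0)\le 0$, and symmetrically the supersolution inequality. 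The main obstacle in both directions --- and the reason the smoothness hypotheses on $a$, $\sigma$, $b$ cannot be relaxed --- is precisely this passage between $u$ and its regularizations through a variable-coefficient second-order operator: the error terms are governed by one derivative of $\sigma$ and by the Lipschitz constant of $b$, which is the quantitative heart of Ishii's argument, so in a self-contained treatment I would simply cite \cite{I} after the hypothesis-matching of the first paragraph.
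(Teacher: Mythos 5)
The paper gives no proof of Theorem~\ref{thm:Ishii}: it is stated as an imported result with a direct citation to Ishii's Theorems~1 and~2, and your proposal correctly recognizes this and ultimately does the same after matching hypotheses. Your additional sketch of the sup/inf-convolution and mollification mechanisms is a plausible account of how such equivalences are typically established, but it goes beyond the paper's treatment, which is pure citation.
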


We have shown in Theorem \ref{thm:viscosity F-K} that $\mathscr{U}$ is a 
viscosity solution of \eqref{eq:F-K}. In order to apply Theorem \ref{thm:Ishii} to prove that $\mathscr{U}$ is also a distributional solution, we need $a_{ij}\in C^{1,1}(\mathcal{Q})$,  $b_i\in C^{0,1}(\mathcal{Q})$, and $\mathscr{U}$ to be  continuous. The difficulty in our setting comes from the fact that we want to assume the coefficients to be only continuous, and that the continuity of $\mathscr{U}$ is not known {\it a priori}. 
However, if all of the conditions in Theorem \ref{thm:Ishii} hold, we have shown that $\mathscr{U}$ is then a classical solution (see the discussion in the second paragraph of Section \ref{section:continuity}).

\section{Conclusions and future research}

We have characterized the function $\mathscr{U}$ as a viscosity solution of  of the Cauchy problem \eqref{eq:F-K}, \eqref{eq:f}, and shown that $\mathscr{U}$ is dominated by any 
nonnegative classical supersolution of this Cauchy problem. 
We have also shown that $\mathscr{U}$ is jointly continuous under appropriate conditions when $n=1$, and therefore the smallest nonnegative classical (super)solution of this Cauchy problem.
It would be interesting to investigate whether those conditions can be relaxed.

It would be also  interesting to study whether $\mathscr{U}$ is a distributional (super)solution, a requirement which consists of two parts: that $\mathscr{U}\in W^{1,2}_{loc}((0,\infty) \times \mathcal{O})$ and that \eqref{eq:F<=0 on Gamma} holds  for some $\Gamma$. The analysis in Section \ref{section:connection} shows that one cannot apply the results or the approach in \cite{I} directly.

Finally, it would be interesting to explore the joint continuity of $\mathscr{U}$ in the multidimensional case; as was  pointed  out  in the second paragraph of Section \ref{section:continuity}, with jointly continuous $\mathscr{U}$,  continuous $f$, and locally H\"{o}lder $a\,(>0)$, $b$ and $h\,(\ge 0)$, one can conclude that $\mathscr{U}$ is a classical solution to \eqref{eq:F-K}.

\appendix

\section{\texorpdfstring{Another proof of Theorem \ref{thm:viscosity} in the one-dimensional case}{Another proof of Theorem 1.1 in the one-dimensional case}}

This section  presents a proof of Theorem \ref{thm:viscosity} in the one-dimensional case which is direct, that is, does not proceed by contradiction. This proof also works in several dimensions, whenever it is known in advance that the  function $U$ of \eqref{eq:U}  is continuous.

Suppose that $\mathcal{O}=(\ell,r)$ is a fixed open interval with $-\infty \le \ell < r \le \infty$\,.
 
\begin{theorem}\label{thm:vis1}
Assume that $\sigma$ and $b$ are continuous. Then the function $ \,U $ of \eqref{eq:U} is a viscosity solution of the parabolic equation \eqref{eq:PDE1}.
\end{theorem}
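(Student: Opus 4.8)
The plan is to prove Theorem \ref{thm:vis1} directly, without the contradiction argument used in Theorem \ref{thm:viscosity F-K}, by exploiting the continuity of $U$ (which is known in the one-dimensional case by \cite[Proposition 4.3]{KR}, and is assumed in advance in the multidimensional extension). Since $U$ is already continuous, we have $U_* = U^* = U$, so the viscosity sub- and supersolution inequalities are to be verified directly at points where a smooth test function touches $U$ from below (for supersolution) or from above (for subsolution).

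First I would fix a test function $\varphi \in C^{1,2}((0,\infty)\times\mathcal{O})$ and a point $(t_0,x_0)\in(0,\infty)\times\mathcal{O}$ at which $U - \varphi$ attains, say, a local maximum equal to $0$, so $U \le \varphi$ near $(t_0,x_0)$ with equality at $(t_0,x_0)$. The goal is to show $(\varphi_t - \mathcal{L}\varphi)(t_0,x_0) \le 0$. The key tool is the identity obtained from It\^o's rule applied to $\varphi(t_0 - t, X(t))$ up to a small stopping time $\nu$ (as in \eqref{eq:dIto F-K} with $h\equiv 0$, $Y\equiv 1$): choosing $\nu$ as the exit time of $(t_0 - t, X(t))$ from a small coordinate neighborhood $\mathcal{N}_\delta \subset\subset (0,\infty)\times\mathcal{O}$ (intersected with $t_0$), the stochastic integral has vanishing expectation because $\varphi_x$ and $\sigma$ are bounded on $\mathcal{N}_\delta$, giving
\begin{equation*}
\varphi(t_0,x_0) - \mathbb{E}_{x_0}\big[\varphi(t_0-\nu, X(\nu))\big] = \mathbb{E}_{x_0}\left[\int_0^\nu (\varphi_t - \mathcal{L}\varphi)(t_0 - t, X(t))\,\mathrm{d}t\right].
\end{equation*}
On the other hand, the strong Markov property gives the martingale identity $U(t_0,x_0) = \mathbb{E}_{x_0}[U(t_0 - \nu, X(\nu))]$ (since $\nu < S$ on $\mathcal{N}_\delta$, this is just $\mathbb{P}_{x_0}[S>t_0] = \mathbb{E}_{x_0}[\mathbb{P}_{X(\nu)}[S > t_0 - \nu]]$). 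Subtracting, and using $\varphi(t_0,x_0) = U(t_0,x_0)$ together with $\varphi \ge U$ on $\mathcal{N}_\delta$ (so $\varphi(t_0-\nu,X(\nu)) \ge U(t_0-\nu,X(\nu))$ pointwise since $(t_0-\nu,X(\nu))\in\overline{\mathcal{N}_\delta}$), I obtain
\begin{equation*}
\mathbb{E}_{x_0}\left[\int_0^\nu (\varphi_t - \mathcal{L}\varphi)(t_0 - t, X(t))\,\mathrm{d}t\right] = \varphi(t_0,x_0) - \mathbb{E}_{x_0}[\varphi(t_0-\nu,X(\nu))] \le U(t_0,x_0) - \mathbb{E}_{x_0}[U(t_0-\nu,X(\nu))] = 0.
\end{equation*}
Then I would divide by $\mathbb{E}_{x_0}[\nu]$ and let $\delta \downarrow 0$: by continuity of $\varphi_t - \mathcal{L}\varphi$ (here we use that $\sigma$, hence $a=\sigma^2$, and $b$ are continuous) and the fact that $X(\cdot)$ stays near $x_0$ and $\nu \to 0$, the averaged integrand converges to $(\varphi_t - \mathcal{L}\varphi)(t_0,x_0)$, yielding $(\varphi_t - \mathcal{L}\varphi)(t_0,x_0) \le 0$. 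The subsolution case is symmetric, with $U - \varphi$ having a local minimum equal to $0$ and all inequalities reversed.

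The main obstacle I anticipate is the limiting argument $\delta \downarrow 0$: one needs that $\mathbb{E}_{x_0}[\nu] > 0$ for small $\delta$ (true since $X$ cannot exit a fixed neighborhood instantaneously, because the martingale part has bounded quadratic variation rate and the drift is bounded on $\mathcal{N}_\delta$), and one needs uniform control so that $\frac{1}{\mathbb{E}_{x_0}[\nu]}\mathbb{E}_{x_0}\big[\int_0^\nu (\varphi_t-\mathcal{L}\varphi)(t_0-t,X(t))\,\mathrm{d}t\big] \to (\varphi_t - \mathcal{L}\varphi)(t_0,x_0)$. This follows from the uniform continuity of $\varphi_t - \mathcal{L}\varphi$ on the compact closure $\overline{\mathcal{N}_{\delta_0}}$ for a fixed reference $\delta_0$: on $[0,\nu]$ we have $(t_0 - t, X(t))\in\mathcal{N}_\delta$, so the integrand differs from $(\varphi_t-\mathcal{L}\varphi)(t_0,x_0)$ by at most the modulus of continuity evaluated at diameter $O(\delta)$, which is independent of $\omega$. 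This bounds the difference of the averages by that modulus, which tends to $0$. A minor technical point worth spelling out is the justification that the stochastic integral's expectation vanishes — this is exactly point (2) in the proof of Theorem \ref{thm:viscosity F-K}, namely boundedness of $\varphi_x$ on $\mathcal{N}_\delta$ and of $\sigma$ on $B_\delta(x_0)$, so it can be cited. Overall this gives a cleaner, non-contradiction route to the viscosity property whenever continuity of $U$ is available.
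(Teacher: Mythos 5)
Your proposal is correct and takes essentially the same route as the paper's own proof: a direct (non-contradiction) argument using the continuity of $U$, the strong-Markov martingale identity $U(t_0,x_0)=\mathbb{E}_{x_0}[U(t_0-\nu,X(\nu))]$, and It\^o's rule applied to $\varphi(t_0-\cdot,X(\cdot))$ up to a small stopping time, followed by a small-time limit. The only real difference is in the limiting device: you normalize by $\mathbb{E}_{x_0}[\nu_\delta]$ and shrink the full space-time neighborhood $\mathcal{N}_\delta$, invoking uniform continuity of $\varphi_t-\mathcal{L}\varphi$, which obliges you to check that $\mathbb{E}_{x_0}[\nu_\delta]>0$ (you do note this); the paper instead fixes a spatial radius $\alpha$, sets $\theta_m=(1/m)\wedge T_\alpha$, multiplies by the deterministic factor $m$, and passes to the limit via dominated convergence plus the mean value theorem, which sidesteps any random-time normalization. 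One minor slip to fix: your final sentence, where $U-\varphi$ has a local minimum and all inequalities reverse, is the \emph{supersolution} case, not the subsolution case (you label the two roles correctly at the start of the proposal).
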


\begin{proof}
Let us show that $U(t,x)$ is a viscosity subsolution to \eqref{eq:PDE1}. The proof for the viscosity supersolution property is similar.

According to the definition of viscosity subsolution and the joint continuity of $U$ (\cite[Proposition 4.3]{KR}), if suffices to verify that for any $(t_0,x_0) \in (0,\infty) \times \mathcal{O}$ and any $\varphi(t,x)\in C^{1,2}((0,\infty) \times \mathcal{O})$ with
\begin{equation}\label{eq:max1}
(U - \varphi) (t_0,x_0) = \max_{(t,x)\in (0,\infty) \times \mathcal{O}}  (U - \varphi) (t,x) = 0\,,
\end{equation}
we have the inequality
$$G(t_0,x_0)  \le 0\quad \text{for the function}\quad G(t,x):=(\varphi_t-\mathcal{L}\varphi)(t,x)$$
with the operator $\mathcal{L}$ defined in \eqref{eq:L}.

We start the diffusion process $\, X(\cdot)$ at $\,X(0) = x_0\,$, consider  $\alpha>0$ such that $[x_0-\alpha, x_0+\alpha] \subset \mathcal{O}$, and define $T_\alpha :=\inf \left\{t>0: |X(t)-x_0| > \alpha \right\}$. 
Then we have $\,S>T_\alpha$ if $\,T_\alpha < \infty\,$, and $S = \infty$ if $T_\alpha = \infty$. 
In either case, the bounded stopping time $$\theta_m \,:= \,\frac{1}{\,m\,} \wedge T_\alpha< S\,.$$ 
Further, for every fixed $\omega \in \Omega$, we notice that $\theta_m (\omega) = 1 /m$  when $m>1 / T_\alpha (\omega)$.

\smallskip
Take $m> \frac{2}{ t_0}$, then we have $\theta_m \le \frac{1}{m} \wedge S \le \frac{t_0}{2} \wedge S <t_0$\,. The strong Markov property of $X(\cdot)$ gives
\begin{eqnarray*} 
\mathbb{E}_{x_0}\left[U(t_0-\theta_m,X(\theta_m))\right]
&= &  \mathbb{E}_{x_0}\left[ \mathbb{P}_{X(\theta_m)}[S>t_0-\theta_m]\right]\ 
=\ \mathbb{E}_{x_0}\big[ \mathbb{P}_{x_0}\left[S>t_0\,|\,\mathscr{F}(\theta_m)\right]\big] \\ \nonumber
&=& \mathbb{P}_{x_0}[S>t_0]  
=U(t_0,x_0)\,.
\end{eqnarray*}

On the other hand, we recall \eqref{eq:max1} and obtain
\begin{equation} \label{eq:ineq1}
\varphi(t_0,x_0) - \mathbb{E}_{x_0}\left[\varphi(t_0-\theta_m,X(\theta_m))\right]\le U(t_0,x_0) - \mathbb{E}_{x_0}\left[U(t_0-\theta_m,X(\theta_m))\right] =0\, .
\end{equation}

Thanks to the assumption $\varphi \in C^{1,2}((0,\infty)\times \mathcal{O})$, we can apply It\^o's change of variable rule  to $\varphi(t_0-s,X(s))$, $s\in [0, \theta_m]\subset[0,t_0)$  and plug in \eqref{eq:SDE} to derive the semimartingale  decomposition  
\begin{equation}\label{eq:dIto}
\text{d} \varphi(t_0-s,X(s))= - G(t_0-s, X(s))\, \text{d}s + \varphi_{x}(t_0-s, X(s))\,\sigma(X(s))\, \text{d}W(s)\,.
\end{equation}
Integrating \eqref{eq:dIto} with respect to $s$ over $[0,\theta_m]\subset[0,t_0)$ and taking the expectation under $\mathbb{P}_{x_0}$ yields
\begin{align*}\nonumber
0\ \ge&\ \, m \cdot \text{LHS\ of\ } \eqref{eq:ineq1}\ 
=\ m\, \mathbb{E}_{x_0}\left[\int_{0}^{\theta_m} G(t_0-s, X(s))\, \text{d}s - \int_{0}^{\theta_m} \sigma(X(s))\varphi_{x}(t_0-s, X(s))\, \text{d}W(s)\right]\\ 
=&\ \, \mathbb{E}_{x_0}\left[m \int_{0}^{\theta_m} 
G(t_0-s, X(s))\, \text{d}s\right]\ :=\ G_m(t_0,x_0)\,.
\end{align*} 
Here in the second equality, the expectations of the integrals with respect to $\mathrm{d}W_k(s)$ have all vanished, due to the local boundedness of functions  $\sigma$ and $\varphi_x$ and the fact that for all $s\in [0, \theta_m]$, we have
$t_0-s\in [t_0/2,t_0]$ and $X(s) \in[x_0-\alpha, x_0+\alpha] \subset \mathcal{O}$  since $\theta_m<\frac{t_0}{2} \wedge S$.  

\smallskip
It then suffices to show that $G_m(t_0,x_0)\to G(t_0,x_0)$ as $m\rightarrow \infty$\,. 
In fact, recalling that $\theta_m \le 1/m$, as well as the boundedness of the terms in $G(t_0-s, X(s))$, we can take the limit and appeal to the dominated convergence theorem to obtain  
\begin{equation*}
\lim_{m\rightarrow \infty} G_m(t_0,x_0)= \mathbb{E}_{x_0}\left[\lim_{m\rightarrow \infty} m \int_{0}^{\theta_m}
G(t_0-s, X(s))\, \text{d}s\right].
\end{equation*}
Finally, since $\theta_m = 1/m$ $(\rightarrow 0)$ for sufficiently large $m$,  the mean value theorem and the continuity in $s$ give 
$\,\lim_{m\rightarrow \infty} G_m(t_0,x_0) = G(t_0,x_0)\,$, 
as desired. 
\end{proof}

\section{\texorpdfstring{Proof of \eqref{eq:dIto F-K}}{Proof of (2.14)}}
\label{app:phiY}

\begin{proof}
We have
\begin{align*}
\text{LHS}\cdot (Y(t))^{-1} 
=&\ - \varphi_{t}(t^*-t,X(t))\, \text{d}t
+ \sum_{i} \varphi_{x_i}(t^*-t,X(t))\, \text{d}X_i(t)\\
\nonumber
&\ + \sum_{i,j} \frac{1}{2} \, \varphi_{x_i x_j}(t^*-t,X(t))\, \text{d}\langle X_i,X_j\rangle (t) 
- h(X(t)) \varphi(t^*-t,X(t))\, \text{d}t\\
\nonumber
=&\ - \varphi_{t}(t^*-t,X(t))\, \text{d}t
+ \sum_{i,k} \varphi_{x_i}(t^*-t,X(t))\big[\sigma_{ik}(X(t))\, \text{d}W_k(t) + b_i(X(t))\, \text{d}t\big]\\
\nonumber
&\ + \sum_{i,j,k} \frac{1}{2} \varphi_{x_i x_j}(t^*-t,X(t)) \sigma_{ik}(X(t))\sigma_{jk}(X(t))\, \text{d}t - h(X(t)) \varphi(t^*-t,X(t))\, \text{d}t\\
\nonumber
=&\ \left[- \varphi_{t}+\sum_i  b_i(X(t)) \varphi_{x_i} 
+ \sum_{i,j} \frac{1}{2} \,a_{ij}(X(t))\varphi_{x_i x_j} - h(X(t))\varphi \right](t^*-t,X(t))\, \text{d}t\\
\nonumber
&\ + \sum_{i,k} \varphi_{x_i}(t^*-t,X(t))\sigma_{ik}(X(t))\, \text{d}W_k(t)\\
\nonumber
=&\ \text{RHS}\cdot (Y(t))^{-1}. \qedhere
\end{align*}
\end{proof}

\section*{Acknowledgements}

The author is greatly indebted to her advisor Professor Ioannis Karatzas for suggesting this problem and for his very careful reading of previous versions of the paper and invaluable advice. 
She is also indebted to Professor Johannes Ruf for his very careful reading of the manuscript and his many helpful suggestions. 
She is grateful to
a referee, an area editor and the editor for their careful and helpful comments.
This work was supported partly by the National Science Foundation under grant NSF-DMS-14-05210.

\end{document}